\colorlet{shadecolor}{blue!20}
\newtheorem{theorem}{Theorem}[section]
\newaliascnt{lem}{theorem}
\newtheorem{lem}[lem]{Lemma}
\newaliascnt{cor}{theorem}
\newtheorem{cor}[cor]{Corollary}
\newaliascnt{prop}{theorem}
\newaliascnt{claim}{theorem}
\theoremstyle{definition}
\newaliascnt{example}{theorem}
\newtheorem{example}[example]{Example}
\newaliascnt{defin}{theorem}
\newaliascnt{rem}{theorem}
\newaliascnt{conj}{theorem}
\newaliascnt{ass}{theorem}
\numberwithin{equation}{section}
\renewcommand{\mathcal}[1]{\mathscr{#1}}
\renewcommand{\R} {\mathbb{R}} \newcommand{\Z} {\mathbb{Z}}  \newcommand{\N} {\mathbb{N}}    \newcommand{\Q} {\mathbb{Q}}
\newclass{\PTIME}{PTIME}
\DeclareRobustCommand\bigop[1]{%
  \mathop{\vphantom{\sum}\mathpalette\bigop@{#1}}\slimits@
}
\newcommand{\bigop@}[2]{%
  \vcenter{%
    \sbox\z@{$#1\sum$}%
    \hbox{\resizebox{\ifx#1\displaystyle.9\fi\dimexpr\ht\z@+\dp\z@}{!}{$\m@th#2$}}%
  }%
}
\providecommand*{\eu}%
{\ensuremath{\mathrm{e}}}
\providecommand*{\iu}%
{\ensuremath{\mathrm{i}}}
\renewcommand{\mathbb}[1]{\varmathbb{#1}}
\title{On the Skolem Problem and Prime Powers} 
\author{George Kenison}
\address{George Kenison, Department of Computer Science, University of Oxford, Oxford, UK.}
\email{george.kenison@cs.ox.ac.uk}
\author{Richard Lipton}
\address{Richard Lipton, Georgia Institute of Technology, Atlanta, USA.}
\email{richard.lipton@cc.gatech.edu}
\author{Jo\"{e}l Ouaknine}
\thanks{The third author is supported by ERC grant AVS-ISS (648701) and DFG grant 389792660 as part of TRR 248 (see \url{https://perspicuous-computing.science}).  Also affiliated to Department of Computer Science, Oxford University, Oxford, UK.}
\address{Jo\"{e}l Ouaknine, Max Planck Institute for Software Systems, Saarbr\"{u}cken, Germany.}
\email{joel@mpi-sws.org}
\author{James Worrell}
\thanks{The fourth author is supported by EPSRC Fellowship EP/N008197/1.}
\address{James Worrell, Department of Computer Science, University of Oxford, Oxford, UK.}
\email{jbw@cs.ox.ac.uk}
\thanks{Accepted for publication in the proceedings of the International Symposium on Symbolic and Algebraic Computation, ISSAC 2020.}
\keywords{Skolem Problem, Algebraic number theory, Recurrence sequences, Decidability}
\begin{document}

\begin{abstract}
The Skolem Problem asks, given a linear recurrence sequence \((u_n)\),
whether there exists \(n\in\N\)  such that \(u_n=0\).
In this paper we consider the following specialisation of the problem: given in addition $c\in\mathbb{N}$, determine whether there
  exists \(n\in\N\)  of the form \(n=lp^k\),  with \(k,l\leq c\)
  and \(p\) any prime number, such that \(u_n=0\).
\end{abstract}

\maketitle

\section{Introduction}
A sequence \((u_n)_{n=0}^\infty\) of real algebraic numbers is called a \textit{linear recurrence sequence} if its terms satisfy a recurrence relation \(u_n = a_1 u_{n-1} + a_2 u_{n-2} + \cdots + a_\ell u_{n-\ell}\), with fixed real algebraic constants \(a_1,\ldots, a_\ell\) such that \(a_\ell\neq 0\).   Such a recurrence is said to have order $\ell$ and
a sequence \((u_n)\) satisfying the recurrence is wholly determined by the initial values \(u_0,\ldots, u_{\ell-1}\).  
The study of linear recurrence sequences is motivated by a wide range of phenomena, in areas such as analysis of algorithms, and biological and economic modelling.  %
Natural decision problems for linear recurrence sequences include:
whether all the terms in a sequence are positive, whether the
terms of the sequence are eventually positive, and whether the
sequence contains a zero.  The latter, commonly known as the Skolem Problem~\cite{everest2003recurrence,halava2005skolem}, is the main object of study in the current paper.

Let \((u_n)\) be a linear recurrence sequence. A remarkable result of
Skolem, Mahler, and Lech states that the set \(\{n\in\N : u_n=0\}\) is the union of a finite set together with a finite number of (infinite) arithmetic progressions.  The original result, proved by Skolem~\cite{skolem1934verfahren} for the field of rational numbers, was subsequently extended to the field of algebraic numbers by Mahler~\cite{mahler1935taylor, mahler1956taylor}, and then further extended to any field of characteristic \(0\) by Lech~\cite{lech1952recurring}.
All known proofs of the Skolem-Mahler-Lech Theorem (as it is now
known) employ techniques from \(p\)-adic analysis.  These proofs are non-constructive and the decidability of the Skolem Problem remains open.  Berstel and Mignotte, however, gave an effective method to obtain all of the arithmetic progressions in the statement of the theorem~\cite{berstel1976deux}.

For fields of positive characteristic, the conclusion of the Skolem-Mahler-Lech Theorem does not hold. Indeed, Lech~\cite{lech1952recurring}  gave the following illustrative example. Let \(K=\mathbb{F}_p(t)\) and consider the sequence with terms \(u_n = (1+t)^n -t^n -1\). Then \((u_n)\) satisfies a linear recurrence over \(K\), but \(u_n=0\) if, and only if, \(n=p^k\).  Nevertheless, Derksen~\cite{derksen2007skolem} established an analogue of the Skolem-Mahler-Lech Theorem for fields of positive characteristic, namely he proved that  the set of zeroes in a field of characteristic \(p\) is a \(p\)-automatic set.  The proof of Derksen was moreover effective, allowing to construct for a given sequence the automaton representing the set 
of its zeros.

Returning to the characteristic-zero setting, progress on the decidability of the Skolem Problem has been
made by restricting the problem to linear recurrence sequences of
low order. Decidability of the Skolem Problem for sequences of order at most \(2\) is straightforward and the results are considered folklore.  Breakthrough work by Mignotte, Shorey, and Tijdeman~\cite{mignotte1984distance}, and, independently, Vereshchagin~\cite{vereshchagin1985occurence},  showed decidability of the Skolem Problem for linear recurrence sequences of order \(3\) and \(4\).
Techniques from \(p\)-adic
analysis and algebraic number theory are employed in both~\cite{mignotte1984distance} and~\cite{vereshchagin1985occurence}. Both papers moreover make critical use of Baker's theorem for
linear forms in logarithms of algebraic numbers.  
The approach via Baker's Theorem taken in the above papers does not appear to extend easily to recurrences of higher order.  In particular, decidability of Skolem's Problem remains open
for recurrences of order \(5\).  
However, the recent resurgence of research activity concerning the
decidability of various sub-cases of the Skolem Problem and related
questions (see the survey~\cite{ouaknine2015linear}) gives an
indication of its fundamental importance to the field.

In this paper we pursue an alternative approach to restricting the order of the recurrence as a means of obtaining decidable specialisations of Skolem's Problem.  
We consider general recurrences, but ask to decide the existence of zeros of certain prescribed forms.
For example, we ask whether
one can show 
decidability of the Skolem Problem when we consider only those
\(n\in\N\) that are prime powers. Our first basic result---which we will
generalise in various ways in the rest of the paper---is the following,
which applies to a class of \textit{simple} linear
recurrence sequences (i.e., those sequences without repeated characteristic roots):

\begin{theorem} \label{thm: FirstCase1}
	Suppose that each term in a linear recurrence sequence
        \((u_n)\) can be written as an algebraic exponential
        polynomial \(u_n= A_1\lambda_1^n+\cdots +A_m\lambda_m^n\) with
        \(A_1,\ldots, A_m\in\mathbb{Z}\) and \(\lambda_1,\ldots,
        \lambda_m\) distinct algebraic integers. Fix \(c\in\N\). Then one can decide whether there exists \(n\in\{p^k : p \text{ prime}, k\le c\}\) such that \(u_n=0\).
\end{theorem}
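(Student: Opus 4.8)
The plan is to exploit the arithmetic of the Frobenius automorphism, using that raising an algebraic integer to a prime power is, modulo a suitable prime ideal, the same as iterating Frobenius. First I would pass to the number field $L$, taken to be the Galois closure of $\mathbb{Q}(\lambda_1,\dots,\lambda_m)$ over $\mathbb{Q}$, with ring of integers $\mathcal{O}_L$ and Galois group $G=\operatorname{Gal}(L/\mathbb{Q})$; since the $A_i\in\mathbb{Z}$ and the $\lambda_i$ are algebraic integers, the value $u_1=\sum_{i=1}^m A_i\lambda_i$ lies in $\mathcal{O}_L$. The entire argument then turns on a single dichotomy, according to whether $u_1=0$.

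If $u_1=0$, then taking $k=0$ gives $u_{p^0}=u_1=0$ for every prime $p$, so $n=1=p^0$ (which is admissible as $0\le c$) is already a zero of the prescribed form, and the procedure answers affirmatively. Hence I may assume $u_1\neq 0$, and the task reduces to bounding the primes $p$ at which a zero $u_{p^k}=0$ can possibly occur. The key step is the following congruence: for a rational prime $p$ unramified in $L$ and a prime $\mathfrak{p}$ of $\mathcal{O}_L$ lying above $p$, the Frobenius element $\operatorname{Frob}_{\mathfrak{p}}\in G$ reduces to the $p$-power map on the residue field $\mathcal{O}_L/\mathfrak{p}$, so that $\alpha^{p^k}\equiv \operatorname{Frob}_{\mathfrak{p}}^{\,k}(\alpha)\pmod{\mathfrak{p}}$ for every $\alpha\in\mathcal{O}_L$. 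Applying this termwise, and using $\operatorname{Frob}_{\mathfrak{p}}^{\,k}(A_i)=A_i$, yields $u_{p^k}\equiv \operatorname{Frob}_{\mathfrak{p}}^{\,k}(u_1)\pmod{\mathfrak{p}}$.

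Since $\operatorname{Frob}_{\mathfrak{p}}$ lies in the decomposition group of $\mathfrak{p}$ and therefore fixes $\mathfrak{p}$, the vanishing $u_{p^k}=0$ forces $\operatorname{Frob}_{\mathfrak{p}}^{\,k}(u_1)\in\mathfrak{p}$, equivalently $u_1\in\mathfrak{p}$, i.e.\ $\mathfrak{p}\mid (u_1)$. As $u_1\neq 0$, only finitely many primes $\mathfrak{p}$ divide the ideal $(u_1)$, and each lies above a rational prime $p$ dividing the fixed nonzero integer $N_{L/\mathbb{Q}}(u_1)$. I would then assemble the decision procedure: compute $u_1$ and $N_{L/\mathbb{Q}}(u_1)$, form the finite set $P$ consisting of the prime divisors of $N_{L/\mathbb{Q}}(u_1)$ together with the finitely many primes ramifying in $L$, and for each $p\in P$ and each $k\in\{0,1,\dots,c\}$ decide whether the algebraic number $u_{p^k}=\sum_i A_i\lambda_i^{p^k}$ equals $0$ by exact computation in $\mathcal{O}_L$; report a zero precisely when some such test succeeds. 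Correctness is immediate from the congruence, which shows that every zero of the prescribed form at an unramified prime must occur at some $p\in P$.

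The conceptual heart, and the step I expect to require the most care, is the Frobenius congruence: recognising that the prime-power exponent $p^k$ is exactly what makes the residue reduction track the $k$-fold iterate of $\operatorname{Frob}_{\mathfrak{p}}$, and that this pins every admissible prime to a divisor of one fixed norm $N_{L/\mathbb{Q}}(u_1)$. The remaining points to discharge carefully are routine but necessary: the effectivity of the algebraic computations (factoring $N_{L/\mathbb{Q}}(u_1)$, computing the $\lambda_i^{p^k}$, and testing their weighted sum for vanishing), and the separate—but entirely elementary—treatment of the finitely many ramified primes and of the degenerate case $u_1=0$.
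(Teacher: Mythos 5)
Your proof is correct, but it takes a genuinely different route from the paper's. The paper never passes to a Galois closure and never invokes Frobenius: it works in the ring of integers \(\mathfrak{O}\) of any number field containing the \(\lambda_i\) and uses the ``freshman's dream''---in the multinomial expansion of \(u_1^{p^k}=(A_1\lambda_1+\cdots+A_m\lambda_m)^{p^k}\) every cross coefficient is divisible by \(p\), and Fermat's little theorem in \(\Z\) gives \(A_i^{p^k}\equiv A_i\pmod{p}\)---to obtain \(u_1^{p^k}\equiv u_{p^k}\pmod{p\mathfrak{O}}\) for \emph{every} rational prime \(p\), ramified or not; then \(u_{p^k}=0\) forces \(p\mathfrak{O}\mid u_1^{p^k}\mathfrak{O}\), and the norm lemma (Lemma~\ref{lem: pbound}) bounds \(p\) by the prime divisors of \(N(u_1\mathfrak{O})\), with no case split whatsoever. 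Your argument instead works modulo a single prime ideal \(\mathfrak{p}\) of the Galois closure, requires \(p\) unramified, and quarantines the ramified primes into a finite exceptional set---all handled correctly, and your key observation that \(\operatorname{Frob}_{\mathfrak{p}}\) stabilises \(\mathfrak{p}\), so that \(\operatorname{Frob}_{\mathfrak{p}}^{k}(u_1)\in\mathfrak{p}\) if and only if \(u_1\in\mathfrak{p}\), is exactly what lets you treat arbitrary exponents \(p^k\) with no condition tying \(k\) to the inertial degree; note that this is also where the hypothesis \(A_i\in\Z\) is essential for you (Frobenius must fix the coefficients), in precise parallel to the paper's use of \(A_i^{p^k}\equiv A_i\pmod{p}\). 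Comparing what each buys: the paper's congruence is more elementary and uniform, and its per-prime conclusion is stronger (all of \(p\mathfrak{O}\) divides \(u_1^{p^k}\mathfrak{O}\), versus your single \(\mathfrak{p}\mid u_1\mathfrak{O}\)), though both yield the same bound \(p\mid N_{L/\mathbb{Q}}(u_1)\) and the same decision procedure; on the other hand, your Frobenius viewpoint anticipates the machinery the paper only deploys in Section~4 for coefficients in \(\mathfrak{O}[x]\) (Lemma~\ref{lem: FLTgeneral} and the split into \(\mathcal{Q}_c(K)\) and the finite set \(\mathcal{R}_c(K)\)), and it even clarifies why the paper's general result must restrict indices to \(\ell p^{kf(p)}\): the congruence \(\varphi^{p^{f(p)}}\equiv\varphi\pmod{p\mathfrak{O}}\) needs exponents that are multiples of the inertial degree, whereas your stabilisation trick shows that with rational integer coefficients no such restriction arises.
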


In general, a simple linear recurrence sequence \((u_n)\) has the property
that each of its terms is given by an algebraic exponential polynomial
\(u_n = A_1\lambda_1^n + \cdots + A_m\lambda_m^n\) with \(A_1,\ldots,
A_m\in\mathfrak{O}\) algebraic integers in a number field \(K\).  In
Theorem~\ref{thm: FirstCase1} we assumed that \(A_1,\ldots,
A_m\in\Z\).  More generally, 
a linear recurrence sequence \((u_n)\) can always be written in the
form \(u_n = A_1(n)\lambda_1^n + \cdots + A_m(n)\lambda_m^n\), 
where the $A_i$ are univariate polynomials and the $\lambda_i$ are
characteristic roots of the recurrence relation. We establish decidability results for linear recurrence sequences \((u_n)\) in this general setting. We consider the case of rational polynomial coefficients in Section 3; that is, \(A_1,\ldots, A_m\in\mathbb{Z}[x]\) and, more generally, algebraic polynomial coefficients in Section 5.  We outline two generalisations of Theorem~\ref{thm: FirstCase1} below.

%
%

First, assume that the linear recurrence sequence \((u_n)\) satisfies \(u_n = A_1(n)\lambda_1^n + \cdots + A_m(n)\lambda_m^n\) such that \(A_1,\ldots, A_m\in\mathbb{Z}[x]\).  The next result follows as a corollary to Theorem~\ref{thm: SecondCase}. In the proof of Theorem~\ref{thm: SecondCase} we introduce and analyse an associated simple linear recurrence \((v_n)\) with terms \(v_n = A_1(0)\lambda_1^n + \cdots + A_m(0)\lambda_m^n\).
\begin{theorem} \label{thm: introthm} 
	Let \((u_n)\) be a recurrence sequence with rational polynomial coefficients and \((v_n)\) the associated simple recurrence. Fix \(c\in\N\).
	If \(v_1\neq 0\) then one can
        decide whether  there exists \(n\in\{p^k : p \text{ prime}, k\le c\}\) such that \(u_n=0\).
\end{theorem}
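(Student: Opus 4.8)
The plan is to show that, under the hypothesis $v_1\ne 0$, only finitely many prime powers can possibly be zeros of $(u_n)$, and that this finite list is explicitly computable; one then simply tests each candidate. The engine is a pair of congruences that compares the term $u_{p^k}$ first with the term $v_{p^k}$ of the associated simple recurrence and then with $v_1$ itself. I work in the ring of integers $\mathfrak{O}_K$ of the number field $K=\Q(\lambda_1,\dots,\lambda_m)$. After clearing denominators once and for all (replacing each $\lambda_i$ by $d\lambda_i$ for a suitable $d\in\N$, which multiplies $u_n$ by $d^n$, preserves the zero set, keeps the $A_i$ in $\Z[x]$, and sends $v_1$ to $d v_1\ne 0$) I may assume that every $\lambda_i$, and hence every $u_n$ and $v_n$, lies in $\mathfrak{O}_K$.

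The first, purely arithmetic, congruence comes from the polynomial coefficients. Writing $A_i(x)=\sum_j a_{ij}x^j$ with $a_{ij}\in\Z$, one has $A_i(p^k)-A_i(0)=\sum_{j\ge 1}a_{ij}p^{kj}$, which is divisible by $p^k$. Consequently
\[
u_{p^k}-v_{p^k}=\sum_{i=1}^m\bigl(A_i(p^k)-A_i(0)\bigr)\lambda_i^{p^k}\equiv 0\pmod{p^k\mathfrak{O}_K},
\]
so in particular $u_{p^k}\equiv v_{p^k}\pmod{\mathfrak p}$ for every prime $\mathfrak p\mid p$ and every $k\ge 1$. The second congruence is a Frobenius computation on the simple recurrence: fixing $\mathfrak p\mid p$ and reducing modulo $\mathfrak p$, the residue field $\mathfrak{O}_K/\mathfrak p$ has characteristic $p$, so the $p^k$-power map is additive there; applied to $v_1=\sum_i A_i(0)\lambda_i$ this gives
\[
v_1^{\,p^k}\equiv\sum_{i=1}^m A_i(0)^{p^k}\lambda_i^{p^k}\equiv\sum_{i=1}^m A_i(0)\,\lambda_i^{p^k}=v_{p^k}\pmod{\mathfrak p},
\]
where the middle step uses $A_i(0)^{p^k}\equiv A_i(0)\pmod p$ for the rational integers $A_i(0)$.

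Combining the two displays yields $u_{p^k}\equiv v_1^{\,p^k}\pmod{\mathfrak p}$ for all $k\ge 1$ and all $\mathfrak p\mid p$. Here the hypothesis $v_1\ne 0$ enters decisively: it makes $N:=N_{K/\Q}(v_1)$ a nonzero rational integer. If $p\nmid N$ then $v_1\notin\mathfrak p$, and since $\mathfrak{O}_K/\mathfrak p$ is a field we get $v_1^{\,p^k}\not\equiv 0\pmod{\mathfrak p}$, whence $u_{p^k}\not\equiv 0\pmod{\mathfrak p}$ and in particular $u_{p^k}\ne 0$. Therefore any zero $u_{p^k}=0$ with $k\ge 1$ forces $p\mid N$. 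This leaves only the finitely many primes dividing $N$, each with at most $c$ admissible exponents, together with the single value $n=1$ (the case $k=0$); this is a finite, computable list of candidates, for each of which the test $u_n=0$ is decidable by exact arithmetic in $\mathfrak{O}_K$. (One may equivalently hand $(v_n)$ to Theorem~\ref{thm: FirstCase1}, but the genuinely new ingredient is the reduction above.)

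The step I expect to be the crux — and the reason the hypothesis $v_1\ne0$ is imposed — is obtaining control that is \emph{uniform in the prime} $p$: the two congruences pin down $u_{p^k}$ modulo $\mathfrak p$ entirely in terms of the single element $v_1$, so the nonvanishing of $v_1$ modulo $\mathfrak p$, which holds for all but the finitely many $p\mid N$, propagates to the nonvanishing of $u_{p^k}$ for all $k\ge 1$ simultaneously. When $v_1=0$ this mechanism collapses (every $v_{p^k}$ is divisible by $\mathfrak p$) and a different argument would be needed, which is presumably why that case is excluded from the statement.
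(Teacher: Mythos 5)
Your proposal is correct and follows essentially the same route as the paper's own proof (Lemma~\ref{lem: modp2} and Theorem~\ref{thm: SecondCase} specialised to \(\ell=1\)): the same two congruences---the freshman's-dream/Frobenius step giving \(v_1^{p^k}\equiv v_{p^k}\) and the divisibility \(p\mid A_i(p^k)-A_i(0)\) giving \(v_{p^k}\equiv u_{p^k}\)---reduce any zero \(u_{p^k}=0\) with \(k\ge 1\) to the computable condition \(p\mid N(v_1\mathfrak{O})\), leaving a finite effective list of candidates to test. Your only deviation is cosmetic: you work modulo each prime ideal \(\mathfrak{p}\) above \(p\) and use that \(\mathfrak{O}_K/\mathfrak{p}\) is a field, whereas the paper works modulo \(p\mathfrak{O}\) and removes the exponent via norm multiplicativity, \(p\mid N(v_1^{p^k}\mathfrak{O})\Rightarrow p\mid N(v_1\mathfrak{O})\).
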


%


%

Now suppose that the terms of \((u_n)\) are given by
\(u_n = A_1(n)\lambda_1^n
+ \cdots + A_m(n)\lambda_m^n\) where the coefficients \(A_1,\ldots,
A_m\in\mathfrak{O}[x]\) are univariate polynomial with \(\mathfrak{O}\) the ring of integers
of a finite Galois extension \(K\) over \(\mathbb{Q}\). 
As before, let \((v_n)\) be the associated simple recurrence.
 To each rational prime \(p\) we associate a constant \(f(p)\) (the \textit{inertial degree} of \(p\Z\) in \(K\)). The next result follows as a corollary to Theorem~\ref{thm: GeneralResult}.
\begin{theorem} \label{thm: introthm3} 
	Suppose that \((u_n)\) is a recurrence sequence with algebraic polynomial coefficients and \((v_n)\) the associated linear
        recurrence as above. Fix
        \(c\in\N\).  If \(v_1\neq 0\) then one can decide whether there exists \(n\in\{p^{kf(p)} : p \text{ prime}, k\le c\}\) such that \(u_n=0\).
\end{theorem}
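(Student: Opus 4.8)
The plan is to reduce \(u_n\) modulo a prime ideal lying above \(p\) and exploit the Frobenius endomorphism of the residue field; the exponent \(kf(p)\) is chosen precisely so that Frobenius acts trivially and collapses \(u_{p^{kf(p)}}\) to the fixed value \(v_1\). First I would fix a rational prime \(p\) and a prime ideal \(\mathfrak{p}\) of \(\mathfrak{O}\) lying above \(p\). Since \(K/\Q\) is Galois, every prime above \(p\) has the same inertial degree \(f(p)\), so the residue field \(\mathfrak{O}/\mathfrak{p}\) is finite of cardinality \(p^{f(p)}\). Consequently the map \(\alpha\mapsto\alpha^{p^{f(p)}}\) is the identity on \(\mathfrak{O}/\mathfrak{p}\), and hence so is \(\alpha\mapsto\alpha^{p^{kf(p)}}\) for every \(k\ge 1\). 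Writing \(\overline{\phantom{x}}\) for reduction modulo \(\mathfrak{p}\), and noting that the \(\lambda_i\) and the coefficients of the \(A_i\) lie in \(\mathfrak{O}\), we obtain \(\overline{\lambda_i}^{\,p^{kf(p)}}=\overline{\lambda_i}\) for each \(i\).

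Next I would compare \(u_n\) with the associated simple recurrence \(v_n\) at the index \(n=p^{kf(p)}\) with \(1\le k\le c\). Because \(p\mid n\), we have \(n\in\mathfrak{p}\), so \(A_i(n)-A_i(0)\in n\mathfrak{O}\subseteq\mathfrak{p}\), that is \(\overline{A_i(n)}=\overline{A_i(0)}\). Combining this with the Frobenius identity above gives
\begin{equation*}
\overline{u_{p^{kf(p)}}}
= \sum_{i=1}^m \overline{A_i(p^{kf(p)})}\,\overline{\lambda_i}^{\,p^{kf(p)}}
= \sum_{i=1}^m \overline{A_i(0)}\,\overline{\lambda_i}
= \overline{v_1},
\end{equation*}
since \(v_1=\sum_i A_i(0)\lambda_i\). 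Thus \(u_{p^{kf(p)}}\equiv v_1 \pmod{\mathfrak{p}}\) for every prime \(\mathfrak{p}\) above \(p\) and every \(k\ge 1\).

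Now I would turn this congruence into a finiteness statement. If \(u_{p^{kf(p)}}=0\) for some \(k\ge 1\), then \(v_1\in\mathfrak{p}\), and therefore \(p\mid N_{K/\Q}(v_1)\). By hypothesis \(v_1\neq 0\), so \(v_1\) is a nonzero algebraic integer and \(N_{K/\Q}(v_1)\) is a nonzero rational integer; hence the set \(S\) of primes dividing it is finite and effectively computable. For every prime \(p\notin S\) and every \(k\ge 1\) we conclude \(u_{p^{kf(p)}}\neq 0\). It therefore remains only to examine the finitely many candidate indices \(n=p^{kf(p)}\) with \(p\in S\) and \(1\le k\le c\) (together with the trivial index \(n=1\) arising from \(k=0\)): for each such \(p\) one computes \(f(p)\) by factoring \(p\) in \(\mathfrak{O}\), forms the finitely many indices, and tests directly whether the algebraic number \(u_n\) vanishes. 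This yields the decision procedure.

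The crux of the argument---and the reason the exponent is scaled by \(f(p)\)---is the reduction step producing \(u_{p^{kf(p)}}\equiv v_1\pmod{\mathfrak{p}}\): the scaling is exactly what forces Frobenius to fix the residues \(\overline{\lambda_i}\), which is essential once the coefficients \(A_i(0)\in\mathfrak{O}\) are no longer rational integers and so cannot be pulled through Frobenius (contrast Theorem~\ref{thm: FirstCase1}, where constant integer coefficients allow the bare exponent \(p^k\)). The main limitation is intrinsic rather than technical: the reduction only bounds the relevant primes when \(v_1\neq 0\), which is precisely why that hypothesis appears; the excluded case \(v_1=0\) loses control of the candidate primes and is genuinely harder.
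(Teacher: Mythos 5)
Your proof is correct, and it takes a genuinely different (and arguably cleaner) route than the paper's. The paper derives this statement from Theorem~\ref{thm: GeneralResult}, whose proof works modulo the full ideal $p\mathfrak{O}$: its key congruence $\varphi^{p^{f(p)}}\equiv\varphi\pmod{p\mathfrak{O}}$ (Lemma~\ref{lem: FLTgeneral}) is obtained by intersecting the prime ideals above $p$, and is therefore only valid when $p\Z$ is unramified; this forces the paper to split $\mathcal{N}_c(K)$ into an unramified part $\mathcal{Q}_c(K)$, treated via Lemma~\ref{lem: modp3} and the norm bound of Lemma~\ref{lem: pbound} (Theorem~\ref{thm: UnramifiedResult}), and a finite ramified part $\mathcal{R}_c(K)$ dispatched by brute-force enumeration. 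You instead reduce modulo a single prime ideal $\mathfrak{p}$ above $p$, where the Fermat congruence $\varphi^{N(\mathfrak{p})}\equiv\varphi\pmod{\mathfrak{p}}$ (the paper's Theorem~\ref{thm: FLTField}) with $N(\mathfrak{p})=p^{f(p)}$ holds whether or not $p$ ramifies; your resulting congruence $u_{p^{kf(p)}}\equiv v_1\pmod{\mathfrak{p}}$ is formally weaker than the paper's mod-$p\mathfrak{O}$ statement, but it is all the argument needs, since $v_1\in\mathfrak{p}$ already gives $N(\mathfrak{p})=p^{f(p)}\mid N(v_1\mathfrak{O})$ and hence $p\mid N_K(v_1)$. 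Your approach thus eliminates the ramified/unramified case distinction altogether, exposing it as an artifact of the choice of modulus; what the paper's stronger congruence buys is uniformity over all primes above $p$ simultaneously, which is not needed for this theorem. Both arguments use the Galois hypothesis for the same purpose (making $f(p)$ independent of the choice of $\mathfrak{p}$), and your handling of the boundary cases --- testing $n=1$ (the case $k=0$) directly, and invoking $v_1\neq 0$ so that $N_K(v_1)$ is a nonzero rational integer with finitely many computable prime divisors --- matches the role of Lemma~\ref{lem: pbound} in the paper.
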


We motivate our decidability results with a discussion of the
decidability of the Skolem Problem for linear recurrence sequences of
order \(5\). The authors of~\cite{halava2005skolem} claim to prove
that the Skolem Problem is decidable for integer linear recurrence
sequences of order \(5\);  however, as pointed out in
\cite{ouaknine2012decision}, there is a gap in the argument.  The
critical case for which the decidability of the Skolem Problem is open is that of a recurrence sequence of order \(5\) whose characteristic polynomial has five distinct roots: four distinct roots \(\lambda_1,\overline{\lambda_1},\lambda_2,\overline{\lambda_2}\in\mathbb{C}\) such that \(|\lambda_1|=|\lambda_2|\), and a fifth root \(\rho\in\R\) of strictly smaller magnitude.  In this case the terms of such a recurrence sequence \((u_n)\) are given by \(u_n = a\mleft(\lambda_1^n + \overline{\lambda_1^n}\mright) + b\mleft(\lambda_2^n + \overline{\lambda_2^n}\mright) + c\rho^n\). Here \(a,b,c\in\R\) are algebraic numbers.  If \(|a|\) and \(|b|\) are not equal then there is no known general procedure to determine \(\{n\in\N : u_n=0\}\).

Next we consider an example of a linear recurrence sequence from the aforementioned critical case.  We motivate the results herein and also illustrate the techniques used in this paper by demonstrating that the sequence does not vanishes at any prime index.

%
 
\begin{example} \label{ex: example}
For this example set \(\lambda_1 = 39+52\iu\), \(\lambda_2 = -60+25\iu\) and \(\rho=1\).  (Our choices of Pythagorean triples \((39,52,65)\) and \((25,60,65)\) ensure that \(|\lambda_1|=|\lambda_2|=65\).) 
Let \((v_n)\) be the linear recurrence sequence whose terms satisfy 
  \begin{equation*}v_n = \lambda_1^n + \overline{\lambda_1^n} + 3\mleft(\lambda_2^n + \overline{\lambda_2^n}\mright) + \rho^n.
  \end{equation*}
 There are no rational primes \(p\in\N\) for which \(v_p=0\).

We omit many technical definitions and details in the following presentation (for such details we refer the reader to the preliminariy material in the next section).
\begin{proof}[Proof of Example~\ref{ex: example}]
    Let \(K\) be the \textit{splitting field} of the minimal polynomial (over \(\Q\)) associated to \((v_n)\).  
  We find that \(K=\mathbb{Q}(\lambda_1, \overline{\lambda_1}, \lambda_2,\overline{\lambda_2}, 1) \cong \mathbb{Q}(i)\).  The \textit{dimension} \(d\) of the field \(K\) as a vector space over \(\mathbb{Q}\) is  \(2\). 
There is a computable constant \(N\in\N\) depending only on \(v_1\) and the field \(K\) introduced in the preliminaries---the norm of the principal ideal generated by \(v_1\)---with the following property.
  Suppose that \(p\in\N\) is a rational prime.  Then, by Corollary~\ref{cor: freshman} and Lemma~\ref{lem: pbound}, \(v_p=0\) only if \(p\vert N\). 

Assume that \(v_p = 0\) for some prime \(p\in\N\).
We calculate \(v_1=-281\), which we use to determine \(N\).
Here \(N=|v_1|^d = 281^2\).  
Thus \(p\vert N = 281^2\) from our assumption. 
By happy coincidence, 281 is a rational prime and so it is sufficient to check whether \(v_p=0\) for the only possible candidate \(p=281\). 
Using Mathematica we compute \(v_{281} \approx 3.7\times 10^{509}\) (to two significant figures).  
We conclude that there does not exist a rational prime \(p\in\N\) such that \(v_p=0\).
\end{proof}

  
 \end{example}

This paper is organised as follows. In Section 2, we recall preliminary terminology and background material from algebraic number theory and recurrence sequences.  In Section 3, we prove decidability results locating zeroes of recurrence sequences of the form \(u_n = A_1(n)\lambda_1^n + \cdots + A_m(n)\lambda_m^n\) with polynomial coefficients \(A_1,\ldots, A_m\in\Z[x]\) having integer coefficients.  The main result in Section 3 is Theorem~\ref{thm: SecondCase}. 
In Section 4 we prove decidability results for linear recurrence sequences with polynomial coefficients  \(A_1,\ldots, A_m\in \mathfrak{O}[x]\), where \(\mathfrak{O}\) is the ring of integers of a Galois number field.  
The main result in Section 4 is Theorem~\ref{thm: GeneralResult}.
In Section 5 we show that the problem of deciding whether a given linear recurrence sequence has a prime zero is \NP-hard.  This matches the best known
lower bound for the general Skolem Problem.

%

\section{Algebraic number theory and linear recurrence sequences}
In this section we recall some basic notions concerning algebraic numbers and linear recurrences that will be used in the sequel.

A complex number \(\alpha\) is \textit{algebraic} if there exists a polynomial \(P\in\mathbb{Q}[x]\) such that \(P(\alpha)=0\).  
The \textit{minimal polynomial} of \(\alpha\in{\mathbb{A}}\) is the unique monic polynomial \(\mu_\alpha\in\Q[x]\) of least degree such that \(\mu(\alpha)=0\).  The \textit{degree} of \(\alpha\), written \(\deg(\alpha)\), is the degree of its minimal polynomial. An \textit{algebraic integer} \(\alpha\) is an algebraic number whose minimal polynomial has integer coefficients.  The collection of all algebraic integers forms a ring \(\mathbb{B}\).

A \textit{number field} \(K\) is a field extension of \(\mathbb{Q}\) whose dimension as a vector space over \(\mathbb{Q}\) is finite.
We call the dimension of this vector space the \textit{degree} of the number field and use the notation \([K\colon\mathbb{Q}]\) for the degree of \(K\).
Call a number field \(K\) \textit{Galois} if it is the splitting field of some separable polynomial over $\mathbb{Q}$.
Let   \(\mathfrak{O}=\mathbb{B}\cap K\) be the ring of algebraic integers in \(K\).  
Because \(\mathbb{B}\cap \mathbb{Q}=\mathbb{Z}\), we refer to the elements of \(\Z\) as \textit{rational integers}.   
For each \(\alpha\in K\) there exists a non-zero \(q\in\mathbb{Z}\) such that \(q\alpha\in\mathfrak{O}\).

Given a number field \(K\) of degree $d$ over $\mathbb{Q}$, there are exactly \(d\) distinct monomorphisms \(\sigma_i \colon K \to \mathbb{C}\).
We define the \textit{norm} \(N_K(\alpha)\) of \(\alpha\in K\) by
    \begin{equation*}
        N_K(\alpha) = \prod_{\ell =1}^d \sigma_\ell(\alpha).
    \end{equation*}
Then $N_K(\alpha) \in \mathbb{Q}$ and furthermore $N_K(\alpha)\in\mathbb{Z}$ if $\alpha\in \mathfrak{O}$.

Suppose that \(P\in\Z[x]\) is a polynomial with integer coefficients. 
The \textit{height} of \(P\) is the maximum of the absolute values of its coefficients
and write \(\| P\|\) for the bit length of the list of its coefficients encoded in binary.
It is clear that the degree of \(P\) is at most \(\|P\|\), and the height of \(P\) is at most \(2^{\|P\|}\).

There is a standard representation of an algebraic number \(\alpha\) as a tuple \((\mu_\alpha, a, b, \varepsilon)\) where \(\mu_\alpha\) is the minimal polynomial of \(\alpha\) and \(a,b,\varepsilon\in\mathbb{Q}\) with \(\varepsilon>0\) sufficiently small so that \(\alpha\) is the unique root of \(\mu_\alpha\) inside the ball of radius \(\varepsilon\) centred at \(a+b\iu\in\mathbb{C}\). 
Given a polynomial \(P\in\Z[x]\), we can compute a standard representation for each of its roots in time polynomial in \(\|P\|\).  
%
%
%
%

We recall some standard terminology and basic results about ideals in \(\mathfrak{O}\).  
The ideal \(\mathfrak{a}=a\mathfrak{O}\) generated by a single element \(a\in\mathfrak{O}\) is called \textit{principal}. 
For two ideals \(\mathfrak{a}\) and \(\mathfrak{b}\) of \(\mathfrak{O}\), define the sum and product by
  \begin{align*}
   \mathfrak{a}+\mathfrak{b} &:= \{a+b: a\in \mathfrak{a}, b\in \mathfrak{b}\},\quad \text{and} \\
   \mathfrak{ab} &:=\biggl\{ \sum_{j=1}^k a_jb_j: a_j\in \mathfrak{a}, b_j\in \mathfrak{b} \biggl\}.
  \end{align*}
Two ideals $\mathfrak{a}$ and $\mathfrak{b}$ are said to be \emph{coprime} if $\mathfrak{a}+\mathfrak{b}=\mathfrak{O}$.
In this case we have $\mathfrak{a}\mathfrak{b}=\mathfrak{a}\cap\mathfrak{b}$.
%
%

For ideals \(\mathfrak{a},\mathfrak{b}\) of \(\mathfrak{O}\) we say \(\mathfrak{a}\) \textit{divides} \(\mathfrak{b}\), 
and write \(\mathfrak{a}\vert\mathfrak{b}\), if there exists an ideal \(\mathfrak{c}\) such that \(\mathfrak{b}=\mathfrak{a}\mathfrak{c}\). 
In addition, \(\mathfrak{a}\vert \mathfrak{b}\) if, and only if, \(\mathfrak{b}\subseteq \mathfrak{a}\).  
An ideal \(\mathfrak{p}\) of \(\mathfrak{O}\) is called \textit{prime} if \(\mathfrak{p}\vert \mathfrak{ab}\) implies  \(\mathfrak{p}\vert \mathfrak{a}\) or \(\mathfrak{p}\vert \mathfrak{b}\).  
Recall that the ring of integers \(\mathfrak{O}\) of a number field does not necessarily have unique factorisation.  
%
However every non-zero ideal of \(\mathfrak{O}\) can be written as a product of prime ideals and, in addition, this factorisation is unique up to the order of the factors.  

Let \(\mathfrak{a}\) be a non-zero ideal of \(\mathfrak{O}\) then the quotient ring \(\mathfrak{O}/\mathfrak{a}\) is finite, which leads us to define the \textit{norm} of \(\mathfrak{a}\) by \(N(\mathfrak{a})= |\mathfrak{O}/\mathfrak{a}|\). This norm has a multiplicative property: \(N(\mathfrak{ab})=N(\mathfrak{a})N(\mathfrak{b})\) for every pair of non-zero ideals \(\mathfrak{a},\mathfrak{b}\) of \(\mathfrak{O}\). We can connect norms of elements and ideals as follows.  Suppose that \(a\in\mathfrak{O}\) is non-zero then \(N(a\mathfrak{O})=|N_K(a)|\) and, in addition, if \(a\in\mathbb{Q}\)  then \(N(a\mathfrak{O})=|a^{d}|\) where \(d=[K\colon\mathbb{Q}]\).

Suppose that \(\mathfrak{p}\) is a prime ideal. Since the quotient ring \(\mathfrak{O}/\mathfrak{p}\) is a finite field and, by definition, \(N(\mathfrak{p})=|\mathfrak{O}/\mathfrak{p}|\), we conclude that \(N(\mathfrak{\mathfrak{p}})=p^f\) where \(f\le [K\colon\mathbb{Q}] \) and \(p\) is a rational prime. 
Indeed, \(p\in\mathfrak{p}\) and, further, it is the only rational prime in \(\mathfrak{p}\). Thus, we say that the prime ideal \(\mathfrak{p}\) \textit{lies above} the prime ideal \(p\Z\).  We will frequently use the following version of Fermat's Little Theorem:
 \begin{theorem} \label{thm: FLTField}
 For any prime ideal $\mathfrak{p}$ and algebraic integer \(\lambda\in\mathfrak{O}\), \(\lambda^{N(\mathfrak{p})} - \lambda \in
 \mathfrak{p}\).
 \end{theorem}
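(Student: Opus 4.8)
The plan is to pass to the residue field $\mathfrak{O}/\mathfrak{p}$ and invoke the classical Fermat identity there. As recalled above, since $\mathfrak{p}$ is prime the quotient $\mathfrak{O}/\mathfrak{p}$ is a finite field, and by definition its cardinality is $q := N(\mathfrak{p})$. Write $\pi\colon\mathfrak{O}\to\mathfrak{O}/\mathfrak{p}$ for the canonical surjection and $\bar\lambda := \pi(\lambda)$ for the image of $\lambda$.

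First I would establish that every element $x$ of the finite field $\mathfrak{O}/\mathfrak{p}$ satisfies $x^q = x$. The nonzero elements form a multiplicative group of order $q-1$, so Lagrange's theorem gives $x^{q-1} = 1$ for every $x\neq 0$, whence $x^q = x$; the identity $x^q = x$ also holds trivially at $x = 0$. Applying this with $x = \bar\lambda$ yields $\bar\lambda^{\,q} = \bar\lambda$ in $\mathfrak{O}/\mathfrak{p}$.

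Finally I would unwind the definitions. Since $\pi$ is a ring homomorphism, $\pi(\lambda^q) = \bar\lambda^{\,q} = \bar\lambda = \pi(\lambda)$, so $\pi(\lambda^q - \lambda) = 0$, i.e.\ $\lambda^q - \lambda \in \ker\pi = \mathfrak{p}$. Recalling $q = N(\mathfrak{p})$ gives the claim. There is essentially no obstacle here: the only point that genuinely uses the structure of $\mathfrak{O}$ is that a prime ideal of the ring of integers is maximal, guaranteeing that $\mathfrak{O}/\mathfrak{p}$ is a field (and not merely an integral domain). This is precisely the fact already recorded in the preliminaries above, so the argument is a direct transcription of the proof of Fermat's Little Theorem over $\Z$.
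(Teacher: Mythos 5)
Your proof is correct. The paper states Theorem~\ref{thm: FLTField} without proof, citing it as a standard version of Fermat's Little Theorem, and your argument---passing to the finite residue field $\mathfrak{O}/\mathfrak{p}$ of cardinality $N(\mathfrak{p})$, applying Lagrange's theorem to its multiplicative group to get $x^{N(\mathfrak{p})}=x$, and pulling back along the quotient map---is precisely the standard argument the paper implicitly relies on (with the harmless implicit assumption, consistent with the paper's conventions on ideal norms, that $\mathfrak{p}$ is a nonzero prime ideal, so that $\mathfrak{O}/\mathfrak{p}$ is indeed finite).
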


%
%
%

We now recall some of the terminology connecting linear recurrence
sequences and exponential polynomials. For further details on this correspondence we refer the reader to ~\cite{everest2003recurrence}.

We call a sequence of algebraic numbers \((u_n)_{n=0}^\infty\) satisfying a recurrence relation \(u_n = a_1u_{n-1} + a_2 u_{n-2} + \cdots + a_\ell u_{n-\ell}\) with fixed real algebraic constants \(a_1,\ldots, a_\ell\) such that \(a_\ell\neq 0\) a \textit{linear recurrence sequence}.  Together with the recurrence relation, the sequence is wholly determined by the initial values \(u_0,\ldots, u_{\ell-1}\).  The polynomial \(f(x) = x^\ell -a_1x^{\ell-1} -\cdots -a_{\ell-1}x - a_\ell\) is called the \textit{characteristic polynomial} associated to the relation.  
Associated to each linear recurrence sequence \((u_n)\) is a recurrence relation of minimal length. We call the characteristic polynomial of this minimal length relation the \textit{minimal polynomial} of the sequence.  Moreover, given a recurrence relation the minimal polynomial divides any characteristic polynomial.  The \textit{order} of a linear recurrence sequence is the degree of its minimal polynomial.

 Let \(\mu\) be the minimal polynomial of a linear recurrence sequence \((u_n)\) and \(K\) the splitting field of \(\mu\).  Over \(K\) the polynomial factorises as a product of powers of distinct linear factors \(\mu(x) = \prod_{i=1}^m (x-\lambda_i)^{n_i}\).
Here the constants \(\lambda_1,\ldots, \lambda_m\in K\) are the
\textit{characteristic roots} of \((u_n)\) with multiplicities
\(n_1,\ldots, n_m\). The terms of a linear recurrence sequence can be
realised as an \textit{exponential polynomial} such that \(u_n=
\sum_{i=1}^m A_i(n)\lambda_i^n\). Here the \(\lambda_i\) are the
distinct characteristic roots of the recurrence \((u_n)\) alongside
polynomial coefficients \(A_k\in K[x]\). If the characteristic
polynomial of a sequence has no repeated roots, the terms in the
sequence are each given by an exponential polynomial  \(u_n=
\sum_{i=1}^m A_i(0)\lambda_i^n\) with constant coefficients.  A linear recurrence sequence that satisfies this condition is called \textit{simple}.


Suppose that \((u_n)_{n=0}^\infty\) is a linear recurrence sequence with characteristic roots \(\lambda_1,\ldots, \lambda_m\in K\).  
For each \(i\in\{1,\ldots, m\}\) there exist  non-zero \(q_i\in\Z\) such that \(q_i\lambda_i\in \mathfrak{O}\).  
Consider the linear recurrence sequence \((w_n)_{n=0}^\infty\) with terms given by \(w_n = q_1^n\cdots q_m^n u_n\).  
By construction, \(w_n = 0\) if and only if \(u_n=0\) and, further, the characteristic roots of \((w_n)\) are algebraic integers in \(\mathfrak{O}\).   Thus, without loss of generality, we assume that each \(\lambda_i\in\mathfrak{O}\) and, in addition, that \(A_1,\ldots,A_m\in\mathfrak{O}[x]\).

Let \((u_n)\) be a linear recurrence sequence with terms \(u_n = A_1(n)\lambda_1^n + \cdots + A_m(n)\lambda_m^n\)
where \(\lambda_1,\ldots, \lambda_m\in \mathfrak{O}\) and \(A_1,\ldots, A_m\in\mathfrak{O}[x]\).  We associate to \((u_n)\) a simple linear recurrence \((v_n)\) given by an exponential polynomial \(v_n = A_1(0)\lambda_1^n + \cdots + A_m(0)\lambda_m^n\). 

We are interested in determining whether \(u_n=0\) for \(n=\ell p^k\) with \(k,\ell\in\N\) bounded and \(p\) any rational prime.  In particular, our method is limited to those coefficients \(\ell \in\{0,1,\ldots, c\}\) for which \(v_\ell\neq 0\).  We introduce the set \(\mathcal{L}_c=\{\ell\in\N : \ell\le c,\; v_\ell\neq 0\}\) consisting of such coefficients.  In the case that \((u_n)_{n=0}^\infty\) is simple we have that \(u_n=v_n\) for each \(n\in\N\), and so we need only consider the \(\ell\le c\) such that \(u_\ell\neq 0\).  In the case that \((u_n)_{n=0}^\infty\) is not simple it is possible that \((v_n)\) is identically zero; for example, \(u_n = n\lambda^n\).  
If \(v_0\neq 0\) then \((v_n)\) is not identically zero.
Otherwise \(v_0=u_0=0\) and we have identified a zero term at an index of the desired form.  

\section{Coefficients in \texorpdfstring{$\mathbb{Z}[x]$}{Z[x]}}

\subsection{Decidability results}
Given a positive rational integer \(n\), recall the multinomial expansion with exponent \(n\) is given by the identity
	\begin{equation*}
		(A_1 x_1 + \cdots + A_m x_m)^n = \sum_{b_1 + \cdots + b_m = n} \binom{n}{b_1,b_2,b_3,\ldots,b_m} \prod_{t=1}^m A_{t}^{b_t} x_t^{b_t}
	\end{equation*}
with the combinatorial coefficient representing the quotient
	\begin{equation*} \label{eq: multicoeff}
		\binom{n}{b_1,b_2,b_3,\ldots,b_m} = \frac{n!}{b_1! b_2! \cdots b_m!}.
	\end{equation*}
%
%

We shall make use of the following result, commonly called the \textit{freshman's dream}.
\begin{cor} \label{cor: freshman}
Suppose that \(A_1,\ldots, A_m\in\Z\) and \(\lambda_1,\ldots, \lambda_m\) lie in the ring \(\mathfrak{O}\) of integers of 
some number field \(k\).  Then for any prime \(p\) and \(k\in\N\) we have the following congruence:
	\begin{equation*}
		(A_1 \lambda_1 + \cdots + A_m \lambda_m)^{p^k} \equiv A_1 \lambda_1^{p^k} + \cdots + A_m \lambda_m^{p^k} \pmod{p\mathfrak{O}}.
	\end{equation*}
\end{cor}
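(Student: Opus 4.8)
The plan is to prove the $k=1$ case directly from the multinomial expansion recalled just above, and then bootstrap to arbitrary $k$ by iterating the $p$-th power map on the quotient ring $\mathfrak{O}/p\mathfrak{O}$. The case $k=0$ is a trivial equality, so the content lies in $k\ge 1$.

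First I would expand $(A_1\lambda_1 + \cdots + A_m\lambda_m)^p$ by the multinomial theorem. The arithmetic heart of the matter is that, for a prime $p$, the coefficient $\binom{p}{b_1,\ldots,b_m} = p!/(b_1!\cdots b_m!)$ is divisible by $p$ whenever all $b_t<p$: the numerator $p!$ is divisible by $p$, while each factorial in the denominator is a product of integers strictly less than $p$ and hence coprime to $p$. The only terms escaping this divisibility are the ``pure'' ones, where some $b_t=p$ and the rest vanish, each carrying coefficient $1$. Since $A_t\in\Z\subseteq\mathfrak{O}$ and $\lambda_t\in\mathfrak{O}$, every monomial $\prod_t A_t^{b_t}\lambda_t^{b_t}$ lies in $\mathfrak{O}$, so each mixed term, being a $p$-multiple of such an element, lies in $p\mathfrak{O}$. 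Reducing modulo $p\mathfrak{O}$ therefore leaves only the pure terms:
\[
(A_1\lambda_1 + \cdots + A_m\lambda_m)^p \equiv A_1^p\lambda_1^p + \cdots + A_m^p\lambda_m^p \pmod{p\mathfrak{O}}.
\]

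Next I would replace each $A_t^p$ by $A_t$, and I expect this to be the conceptually crucial step. The hypothesis $A_t\in\Z$ is exactly what makes it work: by the classical Fermat's Little Theorem $A_t^p\equiv A_t\pmod{p}$ in $\Z$, and since $p\Z\subseteq p\mathfrak{O}$ this congruence persists modulo $p\mathfrak{O}$. This settles $k=1$. I would emphasise that the argument genuinely breaks down for general $A_t\in\mathfrak{O}$, since $A_t^p\equiv A_t$ need not hold modulo $p\mathfrak{O}$; this is precisely why the statement is restricted to integer coefficients, and it foreshadows why the later generalisations must introduce the inertial degree $f(p)$.

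Finally, for general $k$ I would induct, using that reduction modulo $p\mathfrak{O}$ is a ring homomorphism, so congruences are preserved under raising to the $p$-th power. Writing $\Lambda = A_1\lambda_1 + \cdots + A_m\lambda_m$, the inductive step reads
\[
\Lambda^{p^{k+1}} = \bigl(\Lambda^{p^k}\bigr)^{p} \equiv \Bigl(\textstyle\sum_t A_t\lambda_t^{p^k}\Bigr)^{p} \pmod{p\mathfrak{O}},
\]
after which I reapply the $k=1$ case to the element $\sum_t A_t\mu_t$ with $\mu_t=\lambda_t^{p^k}\in\mathfrak{O}$, obtaining $\sum_t A_t\lambda_t^{p^{k+1}}$ as required. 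Conceptually this is nothing but the observation that $x\mapsto x^{p^k}$ is an iterate of the Frobenius endomorphism of the characteristic-$p$ ring $\mathfrak{O}/p\mathfrak{O}$, hence additive and multiplicative, which together with $A_t^{p^k}\equiv A_t$ yields the claim at a stroke; I would keep the elementary multinomial argument as the main proof and mention the Frobenius viewpoint as the clean reformulation.
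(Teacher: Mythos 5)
Your proof is correct, and it reaches the same congruence by a slightly different route than the paper. The paper performs a single multinomial expansion at exponent $p^k$ directly, asserting that every coefficient $\binom{p^k}{b_1,\ldots,b_m}$ is a multiple of $p$ unless exactly one $b_t$ equals $p^k$ (a true fact, but one that requires a Kummer/Legendre-style count of the $p$-adic valuation, which the paper leaves implicit), and then removes the exponents on the $A_i$ by ``repeated application'' of Fermat's Little Theorem, $A_i^{p^k}\equiv A_i \pmod{p\Z}$. You instead prove only the $k=1$ case from the multinomial theorem --- where the divisibility of $\binom{p}{b_1,\ldots,b_m}$ by $p$ is immediate, since $p \mid p!$ while each $b_t! < p!$ in the denominator is coprime to $p$ when all $b_t<p$ --- and then bootstrap to general $k$ by induction, using that congruences modulo $p\mathfrak{O}$ are preserved under taking $p$-th powers and reapplying the base case to $\mu_t=\lambda_t^{p^k}\in\mathfrak{O}$. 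The trade-off: the paper's proof is a one-shot expansion but silently relies on the less elementary multinomial divisibility at exponent $p^k$, whereas your induction is fully self-contained at the cost of an extra (routine) inductive layer; your Frobenius reformulation makes transparent why the iteration works. Your side remark that the step $A_t^p\equiv A_t \pmod{p\mathfrak{O}}$ fails for general $A_t\in\mathfrak{O}$ (e.g.\ $A=\iu$ in $\Z[\iu]$ with $p=3$) is also apt, and correctly anticipates why Section~4 must pass to the exponent $p^{f(p)}$ via the inertial degree.
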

\begin{proof}
Let us expand the left-hand side using the aforementioned multinomial identity. 
Now consider each of the combinatorial coefficients in this expansion.
 If exactly one of the choices $b_1,\ldots,b_t$ is equal to $p^k$
then the corresponding coefficient is equal to $1$, and otherwise 
it is an integer multiple of $p$.
Hence
 	\begin{equation*}
		(A_1 \lambda_1 + \cdots + A_m \lambda_m)^{p^k} \equiv A_1^{p^k} \lambda_1^{p^k} + \cdots + A_m^{p^k} \lambda_m^{p^k} \pmod{p\mathfrak{O}}.
	\end{equation*}
The result follows by repeated application of Fermat's Little Theorem, \(A_i^{p^k} \equiv A_i \pmod{p\Z}\). 
\end{proof}

In combination with Corollary~\ref{cor: freshman}, we use the following technical lemma in the proof of  Theorem~\ref{thm: FirstCase1}.
\begin{lem} \label{lem: pbound} Suppose that \(b\in\mathfrak{O}\) is non-zero. There are only finitely many rational primes \(p\) such that \(p\mathfrak{O} \vert b\mathfrak{O}\) and, in addition, \(N(b\mathfrak{O})\) is an effective bound on such primes.
\end{lem}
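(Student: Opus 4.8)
The plan is to prove that for a non-zero \(b\in\mathfrak{O}\), the set of rational primes \(p\) with \(p\mathfrak{O}\mid b\mathfrak{O}\) is finite and bounded by \(N(b\mathfrak{O})\). First I would translate the divisibility condition into the language of norms. Recall that for ideals, \(p\mathfrak{O}\mid b\mathfrak{O}\) holds if and only if \(b\mathfrak{O}\subseteq p\mathfrak{O}\); and since the norm is multiplicative, if \(p\mathfrak{O}\mid b\mathfrak{O}\) then there is an ideal \(\mathfrak{c}\) with \(b\mathfrak{O}=(p\mathfrak{O})\mathfrak{c}\), whence \(N(b\mathfrak{O})=N(p\mathfrak{O})\,N(\mathfrak{c})\). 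In particular \(N(p\mathfrak{O})\) divides \(N(b\mathfrak{O})\).

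The key observation is then to compute \(N(p\mathfrak{O})\) for a rational prime \(p\). Using the fact recorded in the preliminaries that \(N(a\mathfrak{O})=|a^d|\) whenever \(a\in\mathbb{Q}\), where \(d=[K\colon\mathbb{Q}]\), we get \(N(p\mathfrak{O})=p^d\). Combining this with the divisibility \(N(p\mathfrak{O})\mid N(b\mathfrak{O})\) yields \(p^d\mid N(b\mathfrak{O})\), and in particular \(p\mid N(b\mathfrak{O})\) and \(p\le N(b\mathfrak{O})\). Since \(b\neq 0\) the norm \(N(b\mathfrak{O})=|N_K(b)|\) is a non-zero rational integer, so only finitely many primes can divide it, and each such prime is bounded above by \(N(b\mathfrak{O})\).

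The only point that requires a little care is that \(N(b\mathfrak{O})\) is genuinely a non-zero positive integer: this follows because \(b\in\mathfrak{O}\) is a non-zero algebraic integer, so \(N_K(b)\in\Z\setminus\{0\}\) and \(N(b\mathfrak{O})=|N_K(b)|\ge 1\). Finally, to see that \(N(b\mathfrak{O})\) is an \emph{effective} bound, I would note that given a standard representation of \(b\) one can compute the conjugates \(\sigma_1(b),\ldots,\sigma_d(b)\) and hence the integer \(N_K(b)=\prod_{\ell=1}^d\sigma_\ell(b)\) explicitly; factoring this integer then produces the finite list of candidate primes.

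I do not expect any serious obstacle here: the statement is essentially a multiplicativity-of-norms argument. The one mild subtlety to get right is distinguishing the \emph{ideal norm} \(N(\cdot)\) from the \emph{element norm} \(N_K(\cdot)\) and invoking the bridge \(N(b\mathfrak{O})=|N_K(b)|\) correctly, together with \(N(p\mathfrak{O})=p^d\) for rational \(p\); once these identities are in hand the divisibility \(p^d\mid N(b\mathfrak{O})\) is immediate and the finiteness and effectivity both follow at once.
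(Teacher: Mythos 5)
Your proposal is correct and follows essentially the same route as the paper's own proof: multiplicativity of the ideal norm gives \(p^d = N(p\mathfrak{O}) \mid N(b\mathfrak{O})\), and since \(N(b\mathfrak{O})\) is a computable non-zero rational integer, the finiteness and effectivity of the bound follow immediately. Your additional care in distinguishing the ideal norm from the element norm and verifying \(N(b\mathfrak{O})=|N_K(b)|\ge 1\) is a sound elaboration of the same argument.
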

\begin{proof}
Since the ideal norm is multiplicative we have \(p^d = N(p\mathfrak{O})\vert N(b\mathfrak{O})\) where \(d=[K \colon\mathbb{Q}]\).  We can calculate \(N(b\mathfrak{O})\in\Z\) and so obtain an effective bound on any rational prime \(p\) such that \(p\mathfrak{O} \vert b\mathfrak{O}\).
\end{proof}

\begin{proof}[Proof of Theorem~\ref{thm: FirstCase1}]
Let us assume that the algebraic integers \(\lambda_1,\ldots,
\lambda_m\) all lie in a given number field $K$, and let us denote by
$\mathfrak{O}$ the ring of algebraic integers in $K$.  
We note that  it is decidable whether \(u_{p^0}=u_1=A_1 + \cdots + A_m = 0\).  
Thus we can assume, without loss of generality, that \(u_1\neq 0\).
We shall prove the case \(k=1\).  
The proof for higher powers follows with only minor changes to the argument below.

By Corollary~\ref{cor: freshman}, the following congruence holds modulo \(p\mathfrak{O}\),
	\begin{equation*}
		u_{1}^p = (A_1 \lambda_1 + \cdots + A_m \lambda_m)^p \equiv A_1\lambda_1^{p} + \cdots + A_m\lambda_m^{p} = u_{p}.
	\end{equation*}
Thus  \(u_1^{p}\) and \(u_{p}\) lie in the same coset of \(p\mathfrak{O}\).  It follows that \(u_{p}=0\) only if \(u_1^{p}\in p\mathfrak{O}\). 
Since \(p\mathfrak{O} \vert u_1^p\mathfrak{O}\) and \(u_1\neq 0\) (by assumption), we can apply Lemma~\ref{lem: pbound}. 
As \(N(u_1^p\mathfrak{O})\) has only finitely many prime divisors, we obtain an effective bound on
the rational primes \(p\) such that \(u_{p}=0\).   
We have the desired result:  given
\(c\in\N\), it is decidable whether there exists an \(n\in\{p : p
\text{ prime}\}\) such that \(u_n=0\). %
%
\end{proof}

We now turn our attention to decidability results for 
linear recurrence sequences whose terms are given by an exponential polynomial with polynomial coefficients in $\mathbb{Z}[x]$.

Let \((u_n)\) be a linear recurrence sequence whose terms are given by \(u_n = A_1(n)\lambda_1^n + \cdots + A_m(n)\lambda_m^n\) with \(A_1,\ldots, A_m\in \mathbb{Z}[x]\) and \(\lambda_1,\ldots, \lambda_m\in\mathfrak{O}\) for some ring of integers in a number field \(K\).
We associate a simple sequence \((v_n)\) with terms given by \(v_n = A_1(0)\lambda_1^n + \cdots + A_m(0)\lambda_m^n\) to each such sequence \((u_n)\).
Given \(c\in\N\), we define the set \(\mathcal{N}_c \subset \N\) as follows:
  \begin{equation*}
   \mathcal{N}_c := \bigcup_{\ell\in\mathcal{L}_c} \{\ell p^k : p \text{ prime},\; k\le c\}.
  \end{equation*}
We recall the set \(\mathcal{L}_c = \{\ell\in\N : \ell\le c,\; v_\ell\neq 0\}\) defined in the previous section.  
Hence \(\mathcal{N}_c\) implicitly depends on the sequence \((u_n)\).
If \(u_0=0\) then we have identified a zero term at a desired index. 
 Otherwise \(u_0\neq 0\) and so, for \(c\) sufficiently large, \(\mathcal{N}_c\) is infinite. 
The goal of this section is to prove the following theorem.
\begin{theorem} \label{thm: SecondCase} 
Let \((u_n)\) be a linear recurrence sequence whose terms are given by an exponential polynomial with rational polynomial coefficients as above. 
Fix \(c\in\N\).  
Then one can decide whether there is an \(n\in \mathcal{N}_c\) such that \(u_n = 0\).
\end{theorem}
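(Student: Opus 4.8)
The plan is to follow the template of the proof of Theorem~\ref{thm: FirstCase1}, reducing the problem to a finite search. It suffices to decide, for each fixed $\ell\in\mathcal{L}_c$ separately, whether $u_{\ell p^k}=0$ for some prime $p$ and some $k\le c$; the membership $\ell\in\mathcal{L}_c$ is itself decidable, since we may compute each $v_\ell$ and test it against zero. The case $k=0$ contributes only the index $\ell$ itself, which we dispose of by directly evaluating $u_\ell$ for the finitely many $\ell\le c$. The substance of the argument is therefore to produce, for each $\ell\in\mathcal{L}_c$, an effective bound on those primes $p$ for which $u_{\ell p^k}=0$ is possible with $k\ge 1$.

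The key step is a congruence linking $u_{\ell p^k}$ to the associated simple recurrence. Fix $\ell\in\mathcal{L}_c$, a prime $p$, and $k\ge 1$. Since each $A_i\in\Z[x]$ and $p\mid\ell p^k$, every nonconstant monomial of $A_i$ evaluated at $\ell p^k$ is divisible by $p$, so $A_i(\ell p^k)\equiv A_i(0)\pmod p$ and hence $A_i(\ell p^k)\equiv A_i(0)\pmod{p\mathfrak{O}}$. This gives
\begin{equation*}
  u_{\ell p^k}=\sum_{i=1}^m A_i(\ell p^k)\lambda_i^{\ell p^k}\equiv\sum_{i=1}^m A_i(0)\lambda_i^{\ell p^k}\pmod{p\mathfrak{O}}.
\end{equation*}
Applying Corollary~\ref{cor: freshman} with integer coefficients $A_i(0)$ and algebraic integers $\lambda_i^\ell\in\mathfrak{O}$ yields $v_\ell^{p^k}=\bigl(\sum_i A_i(0)\lambda_i^\ell\bigr)^{p^k}\equiv\sum_i A_i(0)\lambda_i^{\ell p^k}\pmod{p\mathfrak{O}}$, and combining the two congruences gives $u_{\ell p^k}\equiv v_\ell^{p^k}\pmod{p\mathfrak{O}}$.

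It remains to bound the admissible primes. If $u_{\ell p^k}=0$ then $v_\ell^{p^k}\in p\mathfrak{O}$, that is, $p\mathfrak{O}$ divides $v_\ell^{p^k}\mathfrak{O}$. Here lies the main obstacle: applying Lemma~\ref{lem: pbound} naively to $b=v_\ell^{p^k}$ produces the bound $N(v_\ell^{p^k}\mathfrak{O})$, which grows with $p$ and $k$ and is therefore useless for a finite search. The resolution is to pass from the power $v_\ell^{p^k}$ back to $v_\ell$ using primality: choosing any prime ideal $\mathfrak{p}$ lying above $p$, we have $\mathfrak{p}\mid v_\ell^{p^k}\mathfrak{O}$, whence $\mathfrak{p}\mid v_\ell\mathfrak{O}$, so $p\mid N(\mathfrak{p})\mid N(v_\ell\mathfrak{O})$. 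Since $\ell\in\mathcal{L}_c$ guarantees $v_\ell\neq 0$, the quantity $N(v_\ell\mathfrak{O})=|N_K(v_\ell)|$ is a nonzero, computable rational integer, independent of both $p$ and $k$, and every admissible prime satisfies $p\le N(v_\ell\mathfrak{O})$.

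This reduces the problem to a finite computation: for each $\ell\in\mathcal{L}_c$ we compute the bound $N(v_\ell\mathfrak{O})$, enumerate the finitely many primes $p$ up to this bound together with the exponents $1\le k\le c$, and test directly whether $u_{\ell p^k}=0$; combined with the direct checks for $k=0$, this decides the existence of a zero in $\mathcal{N}_c$. The crux of the argument is precisely the uniform prime bound obtained in the previous paragraph, which turns an a priori $p$-dependent divisibility condition into a single finite certificate per $\ell$.
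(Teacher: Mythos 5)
Your proof is correct and takes essentially the same route as the paper: your congruence $u_{\ell p^k}\equiv v_\ell^{p^k}\pmod{p\mathfrak{O}}$ is exactly the content of Lemma~\ref{lem: modp2}, and your reduction to the uniform, $p$- and $k$-independent bound $p\mid N(v_\ell\mathfrak{O})$ followed by a finite search is precisely the paper's proof of Theorem~\ref{thm: SecondCase}. The only cosmetic difference is how you strip the exponent: you pass through a prime ideal $\mathfrak{p}$ above $p$ and use primality of $\mathfrak{p}$, whereas the paper uses multiplicativity of the ideal norm (from $p\mid N(v_\ell^{p}\mathfrak{O})=N(v_\ell\mathfrak{O})^{p}$ and primality of $p$ in $\Z$); these are one-line equivalents.
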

%

 Lemma~\ref{lem: modp2} below is a generalisation of Corollary~\ref{cor: freshman} in two senses: the lemma considers sequences that are not necessarily simple and indices of the form \(\ell p^k\in\N\). 
%
%

\begin{lem} \label{lem: modp2} Let \((u_n)\) be a recurrence sequence as above and \((v_n)\) the associated simple recurrence sequence.  Let \(p\in\N\) be prime and \(k,\ell\in\N\).  Then \(v_\ell^{p^k} - u_{\ell p^k} \in p\mathfrak{O}\).
\end{lem}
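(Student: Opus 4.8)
The plan is to introduce the intermediate quantity $w := \sum_{i=1}^m A_i(0)\lambda_i^{\ell p^k} \in \mathfrak{O}$ and to show separately that $v_\ell^{p^k} \equiv w$ and $u_{\ell p^k} \equiv w$ modulo $p\mathfrak{O}$; adding the two congruences then yields $v_\ell^{p^k} - u_{\ell p^k} \in p\mathfrak{O}$. The whole argument therefore reduces to producing these two congruences to the common term $w$.

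For the first congruence I would apply Corollary~\ref{cor: freshman} (the freshman's dream) with the \emph{coefficients} taken to be the rational integers $A_i(0)\in\Z$ (these are genuinely rational integers precisely because $A_i\in\Z[x]$) and the \emph{algebraic integers} taken to be $\mu_i := \lambda_i^\ell\in\mathfrak{O}$. Since $v_\ell = \sum_{i=1}^m A_i(0)\mu_i$, the corollary gives directly
\[
v_\ell^{p^k} = \Bigl(\sum_{i=1}^m A_i(0)\mu_i\Bigr)^{p^k} \equiv \sum_{i=1}^m A_i(0)\mu_i^{p^k} = \sum_{i=1}^m A_i(0)\lambda_i^{\ell p^k} = w \pmod{p\mathfrak{O}}.
\]

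For the second congruence the key observation concerns only the polynomial coefficients: because each $A_i$ has integer coefficients and the argument $\ell p^k$ is divisible by $p$ (for the relevant range $k\ge 1$), every monomial of positive degree in $A_i$ vanishes modulo $p$, so $A_i(\ell p^k)\equiv A_i(0)\pmod p$ as rational integers. Hence $A_i(\ell p^k)-A_i(0)\in p\Z\subseteq p\mathfrak{O}$; multiplying by $\lambda_i^{\ell p^k}\in\mathfrak{O}$ and summing gives $u_{\ell p^k} = \sum_{i=1}^m A_i(\ell p^k)\lambda_i^{\ell p^k} \equiv \sum_{i=1}^m A_i(0)\lambda_i^{\ell p^k} = w \pmod{p\mathfrak{O}}$. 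Combining with the first congruence yields $v_\ell^{p^k} - u_{\ell p^k}\equiv w - w = 0\pmod{p\mathfrak{O}}$.

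I do not expect a serious obstacle: both ingredients are elementary once the intermediate term $w$ is isolated. The only points demanding care are the bookkeeping that lets Corollary~\ref{cor: freshman} apply verbatim---recognising $A_i(0)$ as a rational integer and grouping $\lambda_i^\ell$ as the base algebraic integer---and noting that the coefficient congruence $A_i(\ell p^k)\equiv A_i(0)\pmod p$ relies on $p\mid \ell p^k$, hence on $k\ge 1$ (the value $k=0$ corresponds to the index $n=\ell$, which is checked directly).
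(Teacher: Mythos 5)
Your proof is correct and essentially identical to the paper's: your intermediate quantity \(w\) is exactly \(v_{\ell p^k}\), and the paper likewise combines the freshman's-dream congruence \(v_\ell^{p^k} \equiv v_{\ell p^k} \pmod{p\mathfrak{O}}\) (Corollary~\ref{cor: freshman}) with the coefficient congruence \(A_i(\ell p^k) \equiv A_i(0) \pmod{p\Z}\) obtained from \((x-y)\mid (A(x)-A(y))\). Your explicit caveat that the coefficient step needs \(k\ge 1\) also matches the paper, which proves the case \(k=1\), notes that higher powers follow with minor changes, and handles \(k=0\) separately (by directly testing \(u_\ell=0\)) in the proof of Theorem~\ref{thm: SecondCase}.
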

\begin{proof}
We prove the case when \(k=1\).  The general case, dealing with higher powers \(p^k\), follows with only minor changes.  

First, we have the congruence \(v_\ell^p \equiv v_{\ell p} \pmod{p\mathfrak{O}}\) by Corollary~\ref{cor: freshman} since
  \begin{equation*}
   \mleft( A_1(0)\lambda_1^\ell + \cdots + A_m(0)\lambda_m^\ell \mright)^p 
\equiv A_1(0)\lambda_1^{\ell p} + \cdots + A_m(0)\lambda_m^{\ell p}.
  \end{equation*}
%

Recall that for \(A\in\mathbb{Z}[x]\) we have \((x-y)\vert (A(x)-A(y))\).  By induction, one can show that \(p \vert (A(lp) - A(0))\) and so \(A(0) \equiv A(\ell p) \pmod{p\Z}\) for each \(A\in\mathbb{Z}[x]\).  This is sufficient to deduce a second congruence
	\begin{equation*}
		v_{\ell p} \equiv A_1(\ell p)\lambda_1^{\ell p} + \cdots + A_m(\ell p)\lambda_m^{\ell p} = u_{\ell p} \pmod{p\mathfrak{O}}.
	\end{equation*}
Together these two congruences give \(v_\ell^p - u_{\ell p} \in p\mathfrak{O}\), the desired result.
\end{proof}

\begin{proof}[Proof of Theorem~\ref{thm: SecondCase}] 
Let us consider the case that \(k=1\).
As previously noted, we can assume there is an \(\ell\le c\) and \(v_\ell\neq 0\) (otherwise \(u_0=0\)). 
Suppose that \(u_{\ell p}=0\). 
Then, by Lemma~\ref{lem: modp2}, \(v_\ell^p \in p\mathfrak{O}\) and so \(p\mathfrak{O} \vert v_\ell^p \mathfrak{O}\). 
Thus \(p \vert N(v_\ell^p \mathfrak{O})\).
Since \(\mathfrak{O}\) is a commutative ring and the ideal norm is multiplicative, we have that \(p \vert N(v_\ell\mathfrak{O})\).
By Lemma~\ref{lem: pbound}, we obtain an effective bound on the divisors of \(v_\ell\mathfrak{O}\)  of the form \(p\mathfrak{O}\) and hence a bound on the rational primes for which \(u_{\ell p} = 0\) is possible.
%
Mutatis mutandis the proof holds for prime powers \(p^k\) with \(k> 1\).  Clearly the case \(k=0\) is decided by determining whether \(u_{\ell}=0\).
\end{proof}
%
%
%


\subsection{Complexity upper bound}

Given a simple linear recurrence sequence \((u_n)\), we establish a quantitative bound on the magnitude of any prime $p$ such that \(u_p=0\).
The bound is
in terms of the size of the problem instance.  
In the case that \((u_n)\) is a simple linear recurrence sequence, we know that \(u_n = A_1\lambda_1^n + \cdots + A_m\lambda_m^n\) and so the size of the problem instance is the bit length \( S = \| \langle \lambda_1, \lambda_2, \ldots, \lambda_m, A_1, A_2,\ldots, A_m \rangle \|\).

We give the following rudimentary bounds in terms of \(S\).  First, we bound \(\log_2 |A_i| +1\), bit length of the integer \(A_i\), from above by \(2^S\).  Second, \(|\lambda_i|\) is bounded from above by \(H(\lambda_i) \le 2^S\) where the height \(H(\lambda_i)\) is the maximum absolute value of the coefficients in \(\mu_{\lambda_i}\).  Finally, we have \(\deg(\lambda_i) \le S\), from which it follows that \([K\colon \mathbb{Q}] = [\mathbb{Q}(\lambda_1,\ldots, \lambda_m) \colon \mathbb{Q}] \le m^S \le S^S\).  Because \(u_1 = A_1\lambda_1 + \cdots A_m \lambda_m\) we have the following elementary bound
    \begin{equation*}
        N(u_1 \mathfrak{O}) \le \prod_{\ell =1}^{[K \colon \mathbb{Q}]} \sum_{k=1}^m \mleft|\sigma_\ell(A_k)\sigma_\ell(\lambda_k) \mright| \le \prod_{\ell =1}^{[K \colon \mathbb{Q}]} S2^{3S} \le \bigl(S2^{3S}\bigr)^{S^S}.
    \end{equation*}

From the above calculations it follows that if $u_p=0$ for some prime $p$ then $p$ is at most $(S2^{3S})^{S^S}$, i.e., double exponential in $S$, the size of the problem instance.


%
%
%
%
%

\section{Coefficients in \texorpdfstring{$\mathfrak{O}[x]$}{O[x]}}

Let us first recall some background material on the decomposition of prime ideals in the ring of integers \(\mathfrak{O}\) 
of a Galois number field \(K\). %
Such decompositions (as products of powers of prime ideals) are particularly well-behaved in this setting---
a comprehensive presentation of this material can be found in~\cite{cohen1993computational}.
Let \(p\in\N\) be prime.
Then \(p\mathfrak{O} = \prod_{i=1}^g \mathfrak{p}_i^{e}\) where the \(\mathfrak{p}_i\) are the prime ideals lying above \(p\mathbb{Z}\). 
Here the integer \(e(p)\ge 1\) is the \textit{ramification index} of \(p\).
The degree of the field extension \(f(p) =[\mathfrak{O}/\mathfrak{p}_i : \Z/p\Z]\), the \textit{inertial degree} of \(\mathfrak{p}_i\) over \(p\Z\), is independent of the prime ideal \(\mathfrak{p}_i\).   
Suppose that \(\mathfrak{p}\) lies above \(p\Z\). 
We have \(N(\mathfrak{p}) = N(p\Z)^{f(p)} = p^{f(p)}\). 
A prime \(p\mathbb{Z}\) is \textit{ramified} in \(\mathfrak{O}\) if \(e>1\) and \textit{unramified} otherwise.  
In particular, only finitely many primes ramify in \(\mathfrak{O}\) since \(p\mathbb{Z}\) ramifies in \(\mathfrak{O}\) if, and only if, \(p\) divides the discriminant of \(K\) (see e.g. \cite{cohen1993computational}).
%
%
%
%

Suppose that \(K\) is Galois over \(\mathbb{Q}\) and let \(\mathfrak{O}\) be the algebraic integers in \(K\).
In this section we shall prove
decidability results locating the zeroes of sequences \((u_n)\) whose
terms are given by an exponential polynomial of the form \(u_n = A_1(n)\lambda_1^n + \cdots + A_m(n)\lambda_m^n\) with coefficients \(A_1,\ldots, A_m\in \mathfrak{O}[x]\) and \(\lambda_1,\ldots, \lambda_m\in\mathfrak{O}\). 
For such a sequence, fix \(c\in\N\) and let \(\mathcal{L}_c=\{\ell\in\N : \ell\le c,\; v_\ell\neq 0\}\) where \((v_n)\) is the simple recurrence sequence with terms given by \(v_n = A_1(0)\lambda_1^n + \cdots + A_m(0)\lambda_m^n\). 
Let \(f(p)\) be the inertial degree of \(p\Z\) in \(\mathfrak{O}\). 
 Then define the set \(\mathcal{N}_c(K)\) as the union
  \begin{equation*}
    \mathcal{N}_c(K) = \bigcup_{\ell\in\mathcal{L}_c} \{\ell p^{kf(p)} : p \text{ prime},\; k\le c\}.
  \end{equation*}
Here our choice of notation is meant to draw comparison with our previous definition for the set \(\mathcal{N}_c\). 
Without loss of generality we assume that given \(c\in\N\) there is an \(l\le c\) such that \(v_\ell \neq 0\) for otherwise the sequence \((u_n)\) vanishes at \(u_0 = v_0 = 0\).
We denote by \(\mathcal{Q}_c(K)\) the subset 
  \begin{equation*}
   \mathcal{Q}_c(K) = \bigcup_{\ell \in\mathcal{L}_c} \{\ell p^{kf(p)} : p\Z \text{ unramified},\; k\le c\}.
  \end{equation*}
Similarly, let \(\mathcal{R}_c(K)\subset \mathcal{N}_c(K)\) be the corresponding set of elements where \(p\mathbb{Z}\) is ramified in \(\mathfrak{O}\). Since there are only finitely many prime ideals \(p\Z\) that are ramified in \(\mathfrak{O}\), the cardinality of the set \(\mathcal{R}_c(K)\) is finite.  By definition, \(\mathcal{N}_c(K) = \mathcal{Q}_c(K)\cup \mathcal{R}_c(K)\).

Our main result is the following theorem.
\begin{theorem} \label{thm: GeneralResult} Fix \(c\in\N\).  Given \((u_n)\) as above, one
  can decide whether there is an \(n\in \mathcal{N}_c(K)\) such that \(u_n=0\).
\end{theorem}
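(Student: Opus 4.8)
The plan is to reduce the problem to testing $u_n = 0$ at finitely many explicitly computable indices. I would first split the task using the decomposition $\mathcal{N}_c(K) = \mathcal{Q}_c(K) \cup \mathcal{R}_c(K)$ recorded above. The ramified part $\mathcal{R}_c(K)$ is already finite---a rational prime ramifies in $\mathfrak{O}$ only when it divides $\disc(K)$, so finitely many primes occur, and the constraints $\ell \in \mathcal{L}_c$ and $k \le c$ leave finitely many indices---so I would dispose of it by directly computing $u_n$ for each $n \in \mathcal{R}_c(K)$. The real work is the unramified part $\mathcal{Q}_c(K)$.

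For $\mathcal{Q}_c(K)$, the key step is to establish the analogue of Lemma~\ref{lem: modp2} in the present setting: whenever $p\Z$ is unramified with inertial degree $f = f(p)$, I claim
\[
  v_\ell^{\,p^{kf}} \equiv u_{\ell p^{kf}} \pmod{p\mathfrak{O}}
\]
for all $\ell, k \in \N$. The obstacle, and the reason the exponent carries the factor $f(p)$, is that the final appeal to the rational Fermat congruence $A_i^{p^k} \equiv A_i \pmod{p\Z}$ used in Corollary~\ref{cor: freshman} is unavailable once the $A_i$ are merely algebraic integers. I expect this to be the main difficulty, and I would resolve it by promoting Fermat's Little Theorem (Theorem~\ref{thm: FLTField}) from a single prime ideal to the modulus $p\mathfrak{O}$: because $K$ is Galois and $p\Z$ is unramified, $p\mathfrak{O} = \prod_{i=1}^g \mathfrak{p}_i = \bigcap_{i=1}^g \mathfrak{p}_i$ with the $\mathfrak{p}_i$ pairwise coprime and each of norm $p^f$. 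Applying Theorem~\ref{thm: FLTField} at every $\mathfrak{p}_i$ yields $a^{p^f} \equiv a \pmod{\mathfrak{p}_i}$ for each $i$ and each $a \in \mathfrak{O}$, hence $a^{p^f} \equiv a \pmod{p\mathfrak{O}}$; iterating gives $a^{p^{kf}} \equiv a \pmod{p\mathfrak{O}}$.

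Granting this, the claimed congruence follows just as in Lemma~\ref{lem: modp2}. The multinomial step in the proof of Corollary~\ref{cor: freshman} (which uses only that the multinomial coefficients are rational integers and so applies verbatim to coefficients in $\mathfrak{O}$) gives $v_\ell^{\,p^{kf}} \equiv \sum_{i=1}^m A_i(0)^{p^{kf}} \lambda_i^{\ell p^{kf}} \pmod{p\mathfrak{O}}$; the promoted Fermat congruence replaces each $A_i(0)^{p^{kf}}$ by $A_i(0)$; and the divisibility $(x - y) \mid (A(x) - A(y))$ in $\mathfrak{O}[x]$ together with $p \mid \ell p^{kf}$ gives $A_i(\ell p^{kf}) \equiv A_i(0) \pmod{p\mathfrak{O}}$. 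Concatenating these produces $v_\ell^{\,p^{kf}} \equiv u_{\ell p^{kf}} \pmod{p\mathfrak{O}}$.

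I would then conclude exactly as in the proof of Theorem~\ref{thm: SecondCase}. If $u_{\ell p^{kf}} = 0$ for some $\ell \in \mathcal{L}_c$ and unramified $p$, then $v_\ell^{\,p^{kf}} \in p\mathfrak{O}$, so $p\mathfrak{O} \mid v_\ell^{\,p^{kf}}\mathfrak{O}$ and hence $p \mid N\bigl(v_\ell^{\,p^{kf}}\mathfrak{O}\bigr)$. By multiplicativity of the ideal norm and primality of $p$ this forces $p \mid N(v_\ell \mathfrak{O})$; since $\ell \in \mathcal{L}_c$ guarantees $v_\ell \neq 0$, Lemma~\ref{lem: pbound} bounds all such $p$ effectively by $N(v_\ell \mathfrak{O})$. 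Ranging over the finitely many $\ell \in \mathcal{L}_c$ and $k \le c$, only finitely many indices $n \in \mathcal{Q}_c(K)$ remain as candidate zeros, and these may be listed and tested. Combined with the direct check over the finite set $\mathcal{R}_c(K)$, this decides whether some $n \in \mathcal{N}_c(K)$ satisfies $u_n = 0$.
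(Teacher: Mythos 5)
Your proposal is correct, and its overall architecture---splitting \(\mathcal{N}_c(K)\) into the finite ramified part \(\mathcal{R}_c(K)\) and the unramified part \(\mathcal{Q}_c(K)\), promoting Fermat's Little Theorem from a single prime ideal to the modulus \(p\mathfrak{O}\) via the coprime factorisation \(p\mathfrak{O}=\mathfrak{p}_1\cdots\mathfrak{p}_g\), and closing with Lemma~\ref{lem: pbound}---coincides with the paper's; in particular your ``promoted'' Fermat congruence is exactly the paper's Lemma~\ref{lem: FLTgeneral}. Where you genuinely diverge is the shape of the key congruence. The paper's Lemma~\ref{lem: modp3} proves \(v_\ell \equiv u_{\ell p^{kf(p)}} \pmod{p\mathfrak{O}}\) with no power of \(v_\ell\) at all, by applying Lemma~\ref{lem: FLTgeneral} termwise to \(\varphi=\lambda_j^{\ell}\); this makes any multinomial expansion unnecessary and lets the final step read off \(p\mathfrak{O}\mid v_\ell\mathfrak{O}\) immediately. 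You instead generalise Lemma~\ref{lem: modp2} literally, proving \(v_\ell^{p^{kf}} \equiv u_{\ell p^{kf}} \pmod{p\mathfrak{O}}\), which requires (i) checking that the multinomial step of Corollary~\ref{cor: freshman} survives coefficients in \(\mathfrak{O}\)---it does, as you note, since the multinomial coefficients are rational integers divisible by \(p\) except for the surviving terms---(ii) applying the promoted Fermat congruence to the coefficients \(A_i(0)\) rather than to powers of the \(\lambda_j\), and (iii) an extra stripping step \(p \mid N(v_\ell^{p^{kf}}\mathfrak{O}) = N(v_\ell\mathfrak{O})^{p^{kf}}\), whence \(p \mid N(v_\ell\mathfrak{O})\) by primality. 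Both routes are sound; the paper's is the leaner (one termwise application of the generalised Fermat congruence replaces your multinomial expansion and norm-stripping), while yours has the virtue of making the arguments of Sections~3 and~4 exactly parallel. One small point of hygiene, shared with the paper's own write-up of Lemma~\ref{lem: modp3}: you claim the congruence ``for all \(\ell,k\in\N\),'' but for \(k=0\) it reduces to \(v_\ell \equiv u_\ell \pmod{p\mathfrak{O}}\), which fails in general because the polynomial-coefficient step needs \(p \mid \ell p^{kf}\); the \(k=0\) indices are just the finitely many \(n=\ell\le c\) and should be (and implicitly are, in your final enumeration) decided by direct evaluation.
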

Since the set \(\mathcal{R}_c(K)\) is finite, locating zero terms \(u_n=0\) for \(n\in\mathcal{R}_c(K)\) is clearly decidable.  So to prove Theorem~\ref{thm: GeneralResult} it is sufficient to prove the next theorem.

\begin{theorem} \label{thm: UnramifiedResult} Fix \(c\in\N\).  Given \((u_n)\) as above,
  one can decide whether there is an \(n\in \mathcal{Q}_c(K)\) such that \(u_n=0\).
\end{theorem}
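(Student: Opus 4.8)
The plan is to follow the template of Theorems~\ref{thm: FirstCase1} and~\ref{thm: SecondCase}, but with the prime-ideal form of Fermat's Little Theorem (Theorem~\ref{thm: FLTField}) replacing its rational counterpart; this substitution is exactly what forces the exponent \(p^{kf(p)}\) in place of \(p^k\). The central step is to prove the analogue of Lemma~\ref{lem: modp2}: for every \emph{unramified} rational prime \(p\), every \(\ell\in\mathcal{L}_c\), and every \(k\le c\),
\begin{equation*}
	v_\ell^{p^{kf(p)}} - u_{\ell p^{kf(p)}} \in p\mathfrak{O}.
\end{equation*}

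To establish this congruence I would argue in three stages. First, since \(p^{kf(p)}\) is a power of \(p\), the multinomial computation underlying Corollary~\ref{cor: freshman}---which uses only that the exponent is a prime power, and so remains valid for coefficients in \(\mathfrak{O}\)---gives \(v_\ell^{p^{kf(p)}} \equiv \sum_i A_i(0)^{p^{kf(p)}}\lambda_i^{\ell p^{kf(p)}} \pmod{p\mathfrak{O}}\). Second, to strip the exponents off the coefficients I would pass to each prime ideal \(\mathfrak{p}\) lying above \(p\): because \(N(\mathfrak{p}) = p^{f(p)}\), iterating Theorem~\ref{thm: FLTField} \(k\) times yields \(A_i(0)^{p^{kf(p)}} = A_i(0)^{N(\mathfrak{p})^k} \equiv A_i(0) \pmod{\mathfrak{p}}\), whence \(v_\ell^{p^{kf(p)}} \equiv v_{\ell p^{kf(p)}} \pmod{\mathfrak{p}}\) for every such \(\mathfrak{p}\). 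Here the unramified hypothesis is decisive: when \(p\) is unramified the distinct prime factors satisfy \(p\mathfrak{O} = \bigcap_i \mathfrak{p}_i\), so a congruence holding modulo each \(\mathfrak{p}_i\) holds modulo \(p\mathfrak{O}\). Third, the identity \((x-y)\mid A(x)-A(y)\) over the commutative ring \(\mathfrak{O}\) shows that \(A_i(\ell p^{kf(p)}) - A_i(0)\) is a multiple of \(\ell p^{kf(p)}\), hence lies in \(p\mathfrak{O}\); this upgrades \(v_{\ell p^{kf(p)}}\) to \(u_{\ell p^{kf(p)}}\) modulo \(p\mathfrak{O}\) and completes the congruence.

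With the congruence in hand the decidability argument mirrors the earlier proofs. If \(u_{\ell p^{kf(p)}} = 0\), then \(v_\ell^{p^{kf(p)}} \in p\mathfrak{O}\), so \(p\mathfrak{O}\) divides the ideal \(v_\ell^{p^{kf(p)}}\mathfrak{O}\); taking norms and using multiplicativity, \(p\) divides \(N(v_\ell\mathfrak{O})\). Since \(\ell\in\mathcal{L}_c\) forces \(v_\ell\neq 0\), the integer \(N(v_\ell\mathfrak{O})\) is nonzero, and Lemma~\ref{lem: pbound} bounds the admissible primes by \(N(v_\ell\mathfrak{O})\). Thus for each of the finitely many pairs \((\ell,k)\) with \(\ell\in\mathcal{L}_c\) and \(k\le c\) there are only finitely many candidate primes \(p\), which can be enumerated; for each candidate one computes \(f(p)\) from the factorisation of \(p\mathfrak{O}\), evaluates \(u_{\ell p^{kf(p)}}\), and tests whether it vanishes.

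I expect the main obstacle to be the passage from congruences modulo the individual \(\mathfrak{p}_i\) to a single congruence modulo \(p\mathfrak{O}\). The prime-ideal Fermat theorem is intrinsically local to each \(\mathfrak{p}\), so it delivers information one prime ideal at a time; it is precisely the unramifiedness of \(p\) that makes \(p\mathfrak{O}\) the intersection of the \(\mathfrak{p}_i\) and thereby lets these local congruences be glued. This explains why ramified primes are excluded from \(\mathcal{Q}_c(K)\) and handled separately via the finiteness of \(\mathcal{R}_c(K)\) in Theorem~\ref{thm: GeneralResult}.
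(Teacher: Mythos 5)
Your proof is correct, but it routes through a different key congruence than the paper does. You prove \(v_\ell^{p^{kf(p)}} - u_{\ell p^{kf(p)}} \in p\mathfrak{O}\), mirroring the structure of Lemma~\ref{lem: modp2}: a multinomial (freshman's dream) expansion of \(v_\ell^{p^{kf(p)}}\), followed by Theorem~\ref{thm: FLTField} applied to the coefficients \(A_i(0)\in\mathfrak{O}\) at each prime ideal above \(p\), glued via \(p\mathfrak{O}=\bigcap_i\mathfrak{p}_i\) in the unramified case. The paper's Lemma~\ref{lem: modp3} is simpler: it never exponentiates \(v_\ell\) at all. Having established Lemma~\ref{lem: FLTgeneral} (\(\varphi^{p^{f(p)}}\equiv\varphi \pmod{p\mathfrak{O}}\) for arbitrary \(\varphi\in\mathfrak{O}\), which is exactly your gluing argument packaged once and for all), the paper applies it termwise with \(\varphi=\lambda_j^\ell\) to get \(v_\ell \equiv v_{\ell p^{kf(p)}} \pmod{p\mathfrak{O}}\) directly---no multinomial expansion, no stripping of exponents from the coefficients---and then upgrades \(v_{\ell p^{kf(p)}}\) to \(u_{\ell p^{kf(p)}}\) via \((x-y)\mid A(x)-A(y)\), exactly as in your third stage. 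Your route buys a statement formally parallel to the \(\Z[x]\) case (and your observation that the multinomial step of Corollary~\ref{cor: freshman} needs only a prime-power exponent and works over \(\mathfrak{O}\) is accurate); the paper's route buys a cleaner congruence \(v_\ell \equiv u_n \pmod{p\mathfrak{O}}\) and a reusable lemma that it later iterates in Lemma~\ref{lem: sump}. Both endpoints coincide: \(u_n=0\) forces \(p\mid N(v_\ell\mathfrak{O})\), and Lemma~\ref{lem: pbound} bounds the candidate primes, with your concluding enumeration (compute \(f(p)\), evaluate, test) being the right finishing move. One small wrinkle, which the paper shares: your congruence relies in its third stage on \(\ell p^{kf(p)}\in p\mathfrak{O}\), which fails for \(k=0\); but for \(k=0\) the index \(n=\ell\) does not depend on \(p\), so that case is decided by testing \(u_\ell=0\) directly for each \(\ell\in\mathcal{L}_c\), and it is worth saying so explicitly.
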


In order to prove Theorem~\ref{thm: UnramifiedResult}, we first prove two technical results. The first, Lemma~\ref{lem: FLTgeneral}, concerns elements of cosets of \(p\mathfrak{O}\) in \(\mathfrak{O}\).  The second, Lemma~\ref{lem: modp3}, plays an analogous r\^{o}le to that of Lemma~\ref{lem: modp2} in Section 3.
\begin{lem} \label{lem: FLTgeneral}  Suppose that \(\varphi\in\mathfrak{O}\) and \(p\Z\) is non-zero prime ideal.  If \(p\Z\) is unramified with inertial degree \(f(p)\) then \(\varphi^{p^{f(p)}} - \varphi \in p\mathfrak{O}\).
\end{lem}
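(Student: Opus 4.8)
The plan is to apply the version of Fermat's Little Theorem recorded as Theorem~\ref{thm: FLTField} to each prime ideal lying above \(p\Z\) and then to recombine the resulting memberships using the unramified hypothesis. First I would invoke the prime decomposition: since \(p\Z\) is unramified, its ramification index is \(e=1\), so the factorisation \(p\mathfrak{O}=\prod_{i=1}^g \mathfrak{p}_i^{e}\) collapses to \(p\mathfrak{O}=\mathfrak{p}_1\cdots\mathfrak{p}_g\), a product of \emph{distinct} prime ideals \(\mathfrak{p}_1,\ldots,\mathfrak{p}_g\) lying above \(p\Z\). Each of these satisfies \(N(\mathfrak{p}_i)=p^{f(p)}\).

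Next, for the given algebraic integer \(\varphi\in\mathfrak{O}\) and each index \(i\), Theorem~\ref{thm: FLTField} gives \(\varphi^{N(\mathfrak{p}_i)}-\varphi=\varphi^{p^{f(p)}}-\varphi\in\mathfrak{p}_i\). Thus the single element \(\varphi^{p^{f(p)}}-\varphi\) lies in every \(\mathfrak{p}_i\) at once, hence in the intersection \(\bigcap_{i=1}^g\mathfrak{p}_i\). The remaining task is to identify this intersection with \(p\mathfrak{O}\). Since each quotient \(\mathfrak{O}/\mathfrak{p}_i\) is a field, each \(\mathfrak{p}_i\) is maximal, and distinct maximal ideals are coprime; hence the \(\mathfrak{p}_i\) are pairwise coprime. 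Using the identity \(\mathfrak{a}\cap\mathfrak{b}=\mathfrak{a}\mathfrak{b}\) for coprime ideals from the preliminaries, together with a short induction on \(g\) (the partial product \(\mathfrak{p}_1\cdots\mathfrak{p}_{g-1}\) stays coprime to \(\mathfrak{p}_g\), as \(1\in\mathfrak{p}_i+\mathfrak{p}_g\) for each \(i<g\) forces \(1\in(\mathfrak{p}_1\cdots\mathfrak{p}_{g-1})+\mathfrak{p}_g\)), one obtains \(\bigcap_{i=1}^g\mathfrak{p}_i=\prod_{i=1}^g\mathfrak{p}_i=p\mathfrak{O}\), where the last equality is exactly the unramified factorisation. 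Combining this with the previous step yields \(\varphi^{p^{f(p)}}-\varphi\in p\mathfrak{O}\), as required.

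I expect the one genuinely load-bearing point to be the passage from membership in each individual \(\mathfrak{p}_i\) to membership in the product \(p\mathfrak{O}\): this is precisely where unramifiedness is essential. For a ramified prime the same argument would only deliver \(\varphi^{p^{f(p)}}-\varphi\in\prod_i\mathfrak{p}_i\), which is a \emph{proper} divisor of \(p\mathfrak{O}=\prod_i\mathfrak{p}_i^{e}\) once some \(e>1\), and the conclusion would fail. Everything else is routine: the induction extending the two-ideal coprimality statement to all \(g\) factors is the only computation worth spelling out, and it is a standard consequence of the coprimality of distinct maximal ideals.
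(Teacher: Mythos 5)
Your proof is correct and takes essentially the same route as the paper's: factor \(p\mathfrak{O}=\mathfrak{p}_1\cdots\mathfrak{p}_g\) into distinct primes using unramifiedness, apply Theorem~\ref{thm: FLTField} to each \(\mathfrak{p}_i\) with \(N(\mathfrak{p}_i)=p^{f(p)}\), and conclude via \(\bigcap_i\mathfrak{p}_i=\prod_i\mathfrak{p}_i=p\mathfrak{O}\) by pairwise coprimality of the distinct prime ideals. The only difference is that you spell out the induction extending two-ideal coprimality to all \(g\) factors, a detail the paper simply asserts.
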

\begin{proof}
Write \(p\mathfrak{O}=\mathfrak{p}_1\cdots \mathfrak{p}_g\) for the unique factorisation of \(p\mathfrak{O}\) as a product of the distinct prime ideals \(\mathfrak{p}_i\) lying above \(p\Z\).  
Here the ramification index is unity because \(p\Z\) is unramified.
By Theorem~\ref{thm: FLTField}, for each \(i\in\{1,\ldots, g\}\) and \(\varphi\in\mathfrak{O}\) we have  \(\varphi^{N(\mathfrak{p}_i)} - \varphi \in\mathfrak{p}_i\).  
Since each of the exponents satisfy \(N(\mathfrak{p}_i)=p^{f(p)}\), we deduce that \(\varphi^{p^{f(p)}} - \varphi \in \cap_i \mathfrak{p}_i\). 
Because the distinct prime ideals \(\mathfrak{p}_i\) are pairwise co-prime, we have \(\cap_i \mathfrak{p}_i = \mathfrak{p}_1\cdots \mathfrak{p}_g= p\mathfrak{O}\) and hence we have the desired result.
\end{proof}

\begin{lem} \label{lem: modp3}
Let \((u_n)\) be a recurrence sequence and \((v_n)\) the associated simple recurrence sequence as above.
Let \(p\in\N\) be a rational prime and \(k,\ell\in\N\).
If \(p\Z\subset\mathfrak{O}\) is unramified with inertial degree \(f(p)\) then \(v_\ell - u_{\ell p^{kf(p)}} \in  p\mathfrak{O}\).
\end{lem}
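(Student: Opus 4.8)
The plan is to mirror the proof of Lemma~\ref{lem: modp2}, proving the representative case \(k=1\) in full and then indicating that higher powers follow by iteration; throughout I work modulo \(p\mathfrak{O}\). The target is the chain of congruences
\[
v_\ell \;\equiv\; v_\ell^{p^{f(p)}} \;\equiv\; v_{\ell p^{f(p)}} \;\equiv\; u_{\ell p^{f(p)}} \pmod{p\mathfrak{O}},
\]
from which the claim \(v_\ell - u_{\ell p^{f(p)}} \in p\mathfrak{O}\) is immediate.

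First I would expand \(v_\ell^{p^{f(p)}}\). Writing \(v_\ell = A_1(0)\lambda_1^\ell + \cdots + A_m(0)\lambda_m^\ell\) with each \(A_i(0)\in\mathfrak{O}\), the multinomial expansion used in the proof of Corollary~\ref{cor: freshman}---whose divisibility of the combinatorial coefficients by \(p\) holds verbatim for coefficients in \(\mathfrak{O}\), not merely in \(\Z\)---gives \(v_\ell^{p^{f(p)}} \equiv A_1(0)^{p^{f(p)}}\lambda_1^{\ell p^{f(p)}} + \cdots + A_m(0)^{p^{f(p)}}\lambda_m^{\ell p^{f(p)}} \pmod{p\mathfrak{O}}\). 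Here is the crucial departure from Section 3: rather than rational Fermat, I would apply Lemma~\ref{lem: FLTgeneral} to each \(A_i(0)\in\mathfrak{O}\), using that \(p\Z\) is unramified of inertial degree \(f(p)\), to obtain \(A_i(0)^{p^{f(p)}} \equiv A_i(0)\). This collapses the right-hand side to \(v_{\ell p^{f(p)}}\), yielding \(v_\ell^{p^{f(p)}} \equiv v_{\ell p^{f(p)}}\). Applying Lemma~\ref{lem: FLTgeneral} once more, now to \(v_\ell\) itself, gives \(v_\ell^{p^{f(p)}} \equiv v_\ell\), and hence \(v_\ell \equiv v_{\ell p^{f(p)}}\).

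To close the chain I would pass from \(v\) to \(u\). Since each \(A_i\in\mathfrak{O}[x]\) satisfies \((x-y)\mid(A_i(x)-A_i(y))\), the difference \(A_i(\ell p^{f(p)}) - A_i(0)\) is a multiple of \(\ell p^{f(p)}\), hence lies in \(p\mathfrak{O}\) because \(f(p)\ge 1\); thus \(A_i(\ell p^{f(p)}) \equiv A_i(0)\), which term by term gives \(v_{\ell p^{f(p)}} \equiv u_{\ell p^{f(p)}}\). Combining the three congruences settles the case \(k=1\).

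For general \(k\) the only new ingredient is the iterate \(\varphi^{p^{kf(p)}} \equiv \varphi \pmod{p\mathfrak{O}}\) of Lemma~\ref{lem: FLTgeneral}, which I expect to be the main obstacle to a fully clean write-up. It follows either by working in each residue field \(\mathfrak{O}/\mathfrak{p}_i \cong \mathbb{F}_{p^{f(p)}}\), where the Frobenius \(x\mapsto x^{p^{f(p)}}\) is the identity and hence so is its \(k\)-fold composite \(x\mapsto x^{p^{kf(p)}}\), or by induction on \(k\), raising \(\varphi^{p^{(k-1)f(p)}} \equiv \varphi\) to the \(p^{f(p)}\) power via the freshman's dream. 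With this iterate in hand, the three-step argument above goes through verbatim with every exponent \(p^{f(p)}\) replaced by \(p^{kf(p)}\).
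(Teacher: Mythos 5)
Your proposal is correct and follows essentially the same route as the paper: the chain of congruences \(v_\ell \equiv v_{\ell p^{kf(p)}} \equiv u_{\ell p^{kf(p)}} \pmod{p\mathfrak{O}}\), with Lemma~\ref{lem: FLTgeneral} supplying the first congruence and the divisibility \((x-y)\mid (A(x)-A(y))\) supplying the second, proved for \(k=1\) with higher \(k\) by iteration. The only (harmless) difference is that the paper obtains \(v_\ell \equiv v_{\ell p^{f(p)}}\) in a single step by applying Lemma~\ref{lem: FLTgeneral} directly to each element \(\varphi=\lambda_j^\ell\), so your detour through the multinomial expansion of \(v_\ell^{p^{f(p)}}\) together with Fermat applied to the coefficients \(A_i(0)\) and to \(v_\ell\) itself, while valid, is unnecessary.
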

\begin{proof}
The result is a consequence of the next congruences
  \begin{equation*}
   v_\ell \equiv v_{\ell p^{kf(p)}} \equiv u_{\ell p^{kf(p)}} \pmod{p\mathfrak{O}}.
  \end{equation*}
The congruences hold trivially when \(k=0\).
We shall prove the case \(k=1\) below and omit the case \(k>1\) as it follows similarly.
The first congruence is a simple application of Lemma~\ref{lem: FLTgeneral}:
  \begin{equation*}
  	v_\ell = \sum_{j=1}^m A_j(0)\lambda_j^\ell \equiv \sum_{j=1}^m  A_j(0)\lambda_j^{\ell{{p^{f(p)}}}} = v_{\ell p^{f(p)}} \pmod{p\mathfrak{O}}.
	\end{equation*}
%
Recall that for \(A\in\mathfrak{O}[x]\) we have \((x-y)\vert (A(x)-A(y))\). The second congruence holds since \(p\mathfrak{O} \ni \ell p^{f(p)} \vert (A(\ell p^{f(p)}) -A(0))\) or equivalently \(A(0) \equiv A(\ell p^{f(p)}) \pmod{p\mathfrak{O}}\) for each \(A\in\mathfrak{O}[x]\).
Thus
	\begin{equation*}
		v_{\ell p^{f(p)}} \equiv \sum_{j=1}^m A_j \mleft( \ell p^{f(p)} \mright) \lambda_j^{\ell{{p^{f(p)}}}} = u_{\ell p^{f(p)}} \pmod{p\mathfrak{O}}.
	\end{equation*}
  Hence \(v_\ell - u_{\ell p^{f(p)}} \in p\mathfrak{O}\) as desired.
\end{proof}

\begin{proof}[Proof of Theorem~\ref{thm: UnramifiedResult}]
Fix \(c\in\N\)
and assume that \(n\in \mathcal{Q}_c(K)\) such that \(u_n=0\).
Then \(n\) is of the form \(\ell p^{kf(p)}\) where \(p\) is a prime and \(p\Z\subset \mathfrak{O}\) is unramified. 
By Lemma~\ref{lem: modp3}, \(v_\ell - u_{\ell p^{kf(p)}} \in  p\mathfrak{O}\). 
Thus \(v_\ell \in {p\mathfrak{O}}\) and therefore \(p\mathfrak{O} \vert v_\ell \mathfrak{O}\).   
We then apply Lemma~\ref{lem: pbound} to give an effective bound on the primes by a divisibility argument for \(N(v_\ell \mathfrak{O})\). Hence the result.
\end{proof}
%
%
%

Our approach in the proof of Theorem~\ref{thm: GeneralResult} extends in the following way:
we can decide whether there exists there is an \(n=\sum_{j=1}^t l_jp^{k_jf(p)}\) such that \(u_n=0\). 
Here the constants \(k_j, l_j\in\N\) are bounded independently of the rational prime \(p\), and \(f(p)\) is the inertial degree of \(p\Z\subset \mathfrak{O}\).
For \(l_1,\ldots, l_t, k_1, \ldots, k_t\in\N\), we define
	\begin{equation*}
		S_m = S_m(l_j; k_j) := \begin{cases}
						 \sum_{j=1}^t l_jm^{k_j f(m)} & \text{if } m \text{ is prime}, \\
						 \sum_{j=1}^t l_j & \text{if } m=1.
				\end{cases}
	\end{equation*}
Fix \(c\in\N\) and, as before, let \(\mathcal{L}_c = \{\ell\in\N \colon \ell\le c,\, v_\ell\neq 0\}\).  Define the set \(\mathcal{N}_c'(K)\) as follows
  \begin{equation*}
   \mathcal{N}_c'(K) = \bigcup_{S_1 \in \mathcal{L}_c} \mleft\{ S_p(l_j; k_j) \colon p\ \textit{prime},\, k_j\le c \mright\}.
  \end{equation*}	
  We define the sets \(\mathcal{Q}_c'(K)\), for unramified \(p\Z\) in \(K\), and \(\mathcal{R}_c'(K)\), for ramified \(p\Z\) in \(K\), in an analogous manner to the sets \(\mathcal{Q}_c(K)\) and \(\mathcal{R}_c(K)\) associated to \(\mathcal{N}_c(K)\).  
  Then, like before, \(\mathcal{N}_c'(K)=\mathcal{Q}_c'(K)\cup \mathcal{R}_c'(K)\) and \(\mathcal{R}_c'(K)\) has finite cardinality.

We have the next decidability result.
\begin{theorem} \label{thm: sumtheorem} Fix \(c\in\N\).  Then, given \((u_n)\) as above, one can decide whether there is an \(n\in\mathcal{N}_c'(K)\) such that \(u_n=0\).
\end{theorem}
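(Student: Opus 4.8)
The plan is to reuse the architecture of the proof of Theorem~\ref{thm: GeneralResult}: reduce to the unramified primes, isolate a single congruence that forces every admissible \(p\) to divide a fixed ideal norm, and then appeal to Lemma~\ref{lem: pbound}. First I would write \(\mathcal{N}_c'(K) = \mathcal{Q}_c'(K) \cup \mathcal{R}_c'(K)\). As recorded above, \(\mathcal{R}_c'(K)\) is finite, since only finitely many rational primes ramify in \(\mathfrak{O}\) and the exponents \(k_j, l_j\) are bounded by \(c\); locating a zero among those indices is therefore decided by direct evaluation of the finitely many terms \(u_n\). It thus suffices to decide whether \(u_n = 0\) for some \(n = S_p \in \mathcal{Q}_c'(K)\) with \(p\Z\) unramified, which is the analogue of Theorem~\ref{thm: UnramifiedResult}.

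The technical heart is a congruence generalizing Lemma~\ref{lem: modp3}: for every unramified prime \(p\) of inertial degree \(f(p)\) and every parameter choice with \(S_1 = \sum_{j} l_j \in \mathcal{L}_c\), I claim \(v_{S_1} - u_{S_p} \in p\mathfrak{O}\). I would prove this in two steps. The new ingredient, relative to the single-term Lemma~\ref{lem: modp3}, is that Lemma~\ref{lem: FLTgeneral} iterates: a straightforward induction on \(k\) shows \(\varphi^{p^{k f(p)}} \equiv \varphi \pmod{p\mathfrak{O}}\) for all \(\varphi \in \mathfrak{O}\), the step from \(k\) to \(k+1\) applying Lemma~\ref{lem: FLTgeneral} to \(\varphi^{p^{k f(p)}}\). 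Applying this with \(\varphi = \lambda_i^{l_j}\) and multiplying the resulting congruences over \(j\)---legitimate because \(p\mathfrak{O}\) is an ideal---gives \(\lambda_i^{S_p} = \prod_{j} \lambda_i^{l_j p^{k_j f(p)}} \equiv \prod_{j} \lambda_i^{l_j} = \lambda_i^{S_1} \pmod{p\mathfrak{O}}\), whence \(v_{S_p} \equiv v_{S_1}\). For the polynomial coefficients I would invoke \((x - y) \mid (A_i(x) - A_i(y))\) together with \(S_p \in p\mathfrak{O}\) to obtain \(A_i(S_p) \equiv A_i(0)\), hence \(u_{S_p} \equiv v_{S_p}\); chaining the two congruences yields the claim.

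The decision procedure is then immediate. If \(u_{S_p} = 0\) for an unramified \(p\), the congruence forces \(v_{S_1} \in p\mathfrak{O}\), so \(p\mathfrak{O} \mid v_{S_1}\mathfrak{O}\) and hence \(p \mid N(v_{S_1}\mathfrak{O})\). Because \(S_1 \in \mathcal{L}_c\) guarantees \(v_{S_1} \neq 0\), Lemma~\ref{lem: pbound} provides an effective bound on such \(p\); there remain finitely many candidate primes, and for each (together with each of the finitely many bounded parameter choices) we test \(u_{S_p} = 0\) directly.

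I expect the multiplicative passage \(\lambda_i^{S_p} \equiv \lambda_i^{S_1}\) to demand the most care, since the index is now a sum: one must iterate Lemma~\ref{lem: FLTgeneral} across each factor \(p^{k_j f(p)}\) and then recombine the \(t\) congruences. A secondary point is the verification that \(S_p \in p\mathfrak{O}\), which is clean precisely when each \(k_j \ge 1\), so that every summand \(l_j p^{k_j f(p)}\) is a rational-integer multiple of \(p\); any term with \(k_j = 0\) contributes a constant independent of \(p\), which should be folded into the associated simple recurrence (equivalently, absorbed into the base index \(S_1\)) before the argument is run.
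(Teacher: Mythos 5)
Your proposal is correct and follows essentially the same route as the paper: the same decomposition \(\mathcal{N}_c'(K)=\mathcal{Q}_c'(K)\cup\mathcal{R}_c'(K)\) with the finite ramified part checked directly, the same key congruence \(v_{S_1}-u_{S_p}\in p\mathfrak{O}\) (this is exactly the paper's \autoref{lem: sump}, proved by iterating \autoref{lem: FLTgeneral} and using \((x-y)\mid(A(x)-A(y))\)), and the same conclusion via \autoref{lem: pbound} applied to \(N(v_{S_1}\mathfrak{O})\). Your closing caveat about terms with \(k_j=0\) is a genuine point of care that the paper's sketch glosses over---its proof of \autoref{lem: sump} asserts \(S_p\in p\mathfrak{O}\) without qualification, which fails when some \(k_j=0\)---and your fix of absorbing such constant summands into the base index before running the argument is the right repair.
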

The proof of Theorem~\ref{thm: sumtheorem} follows the approach in the proof of Theorem~\ref{thm: GeneralResult}. Since the cardinality of \(\mathcal{R}_c'(K)\) is finite, we need only prove the next theorem in order to prove Theorem~\ref{thm: sumtheorem}.
\begin{theorem} \label{thm: sumtheoremQ} Fix \(c\in\N\).  Then, given \((u_n)\) as above, one can
  decide whether there is an \(n\in\mathcal{Q}_c'(K)\) such that \(u_n=0\).
\end{theorem}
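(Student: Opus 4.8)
The plan is to follow the template of the proof of Theorem~\ref{thm: UnramifiedResult}, replacing Lemma~\ref{lem: modp3} by a congruence tailored to the composite index \(S_p=\sum_{j=1}^t l_jp^{k_jf(p)}\). First I would reduce to finitely many cases: since \(S_1=\sum_{j=1}^t l_j\in\mathcal{L}_c\) forces \(S_1\le c\) and each \(k_j\le c\), only finitely many parameter tuples \((l_1,\ldots,l_t;k_1,\ldots,k_t)\) can occur. It therefore suffices to fix one such tuple and decide whether \(u_{S_p}=0\) for some unramified rational prime \(p\).

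The crux is a generalisation of Lemma~\ref{lem: modp3}, namely the claim \(v_{S_1}-u_{S_p}\in p\mathfrak{O}\), which I would obtain from the chain of congruences
\begin{equation*}
v_{S_1}\equiv v_{S_p}\equiv u_{S_p}\pmod{p\mathfrak{O}}.
\end{equation*}
For the first congruence, iterating Lemma~\ref{lem: FLTgeneral} yields \(\lambda_i^{p^{kf(p)}}\equiv\lambda_i\pmod{p\mathfrak{O}}\) for every \(k\ge 0\) (an equality when \(k=0\)), so that
\begin{equation*}
\lambda_i^{S_p}=\prod_{j=1}^t\mleft(\lambda_i^{p^{k_jf(p)}}\mright)^{l_j}\equiv\prod_{j=1}^t\lambda_i^{l_j}=\lambda_i^{S_1}\pmod{p\mathfrak{O}};
\end{equation*}
multiplying by the coefficients \(A_i(0)\) and summing over \(i\) gives \(v_{S_p}\equiv v_{S_1}\). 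For the second congruence I would again invoke \((x-y)\mid(A(x)-A(y))\) for \(A\in\mathfrak{O}[x]\): when every \(k_j\ge 1\) the index \(S_p\) is itself a multiple of \(p\), and since \(S_p\mid(A_i(S_p)-A_i(0))\) this forces \(A_i(S_p)\equiv A_i(0)\pmod{p\mathfrak{O}}\), whence \(u_{S_p}\equiv v_{S_p}\).

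Granting the congruence, the conclusion follows exactly as before. Because \(S_1\in\mathcal{L}_c\) we have \(v_{S_1}\neq 0\), so \(u_{S_p}=0\) forces \(v_{S_1}\in p\mathfrak{O}\), that is, \(p\mathfrak{O}\mid v_{S_1}\mathfrak{O}\). Lemma~\ref{lem: pbound} then bounds \(p\) by \(N(v_{S_1}\mathfrak{O})\), leaving only finitely many candidate primes, each of which can be tested directly.

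I expect the main obstacle to lie in the combinatorial bookkeeping of the exponent \(S_p\): verifying that the Fermat congruence of Lemma~\ref{lem: FLTgeneral} telescopes correctly across the \(t\) summands, and, in the coefficient congruence, correctly isolating the portion of \(S_p\) that is divisible by \(p\). In particular, a summand with \(k_j=0\) contributes only a \(p\)-independent constant, so in the presence of such terms the relevant witness evaluates the coefficient polynomials at that constant rather than at \(0\); keeping this bounded base shift separate from the genuine prime-power part, and absorbing it into the finite enumeration over tuples, is the delicate point that must be handled with care.
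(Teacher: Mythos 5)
Your proposal is correct and follows essentially the same route as the paper: your tailored congruence \(v_{S_1}-u_{S_p}\in p\mathfrak{O}\) is precisely the paper's Lemma~\ref{lem: sump} (proved, as you do, by iterating Lemma~\ref{lem: FLTgeneral} for the characteristic roots and using \((x-y)\mid(A(x)-A(y))\) for the coefficient polynomials), after which Lemma~\ref{lem: pbound} bounds the admissible unramified primes by \(N(v_{S_1}\mathfrak{O})\), leaving finitely many candidates to check, exactly as in the paper. Your closing caveat about summands with \(k_j=0\) is well taken: the paper's own sketch silently assumes \(p\mid S_p\) (it asserts \(S_p\in p\mathfrak{O}\)), which is valid only when every \(k_j\ge 1\), so your explicit separation of the \(p\)-independent base shift is, if anything, more careful than the published argument.
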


Given its similarities to the proof of Theorem~\ref{thm: UnramifiedResult}, we omit a formal proof of Theorem~\ref{thm: sumtheoremQ}; instead, we outline the key steps in the proof. 
We require the following technical lemma; Lemma~\ref{lem: sump} generalises the result in Lemma~\ref{lem: modp3}.
%
\begin{lem} \label{lem: sump}
Let \((u_n)\) be a recurrence sequence and \((v_n)\) the associated simple recurrence sequence as above.
Let \(p\in\N\) be a rational prime and \(S_p(l_j ; k_j)\) be defined as above.
If \(p\Z\subset\mathfrak{O}\) is unramified then \(u_{S_p} - v_{S_1} \in p\mathfrak{O}\).
\end{lem}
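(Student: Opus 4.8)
The plan is to follow the structure of the proof of Lemma~\ref{lem: modp3} closely, establishing the chain of congruences
\[
	v_{S_1} \equiv v_{S_p} \equiv u_{S_p} \pmod{p\mathfrak{O}},
\]
from which \(u_{S_p} - v_{S_1} \in p\mathfrak{O}\) follows at once. Throughout I would fix an unramified rational prime \(p\) with inertial degree \(f(p)\) and write \(S_p = \sum_{j=1}^t l_j p^{k_j f(p)}\) and \(S_1 = \sum_{j=1}^t l_j\), with the root index \(i\) ranging over \(1,\ldots,m\).

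For the first congruence I would first extend Lemma~\ref{lem: FLTgeneral} to arbitrary powers of \(p^{f(p)}\): a short induction on \(k\), applying Lemma~\ref{lem: FLTgeneral} to the algebraic integer \(\varphi^{p^{(k-1)f(p)}}\) and then invoking the inductive hypothesis, yields \(\varphi^{p^{kf(p)}} \equiv \varphi \pmod{p\mathfrak{O}}\) for every \(\varphi\in\mathfrak{O}\) and \(k\in\N\). Since the exponent \(S_p\) is itself a sum, each power of a characteristic root factors as \(\lambda_i^{S_p} = \prod_{j=1}^t \bigl(\lambda_i^{l_j}\bigr)^{p^{k_j f(p)}}\); applying the extended congruence to each factor \(\lambda_i^{l_j}\in\mathfrak{O}\) and multiplying the resulting congruences (legitimate because \(p\mathfrak{O}\) is an ideal of the commutative ring \(\mathfrak{O}\)) gives \(\lambda_i^{S_p} \equiv \prod_{j=1}^t \lambda_i^{l_j} = \lambda_i^{S_1} \pmod{p\mathfrak{O}}\). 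Scaling by \(A_i(0)\) and summing over \(i\) then delivers \(v_{S_1} \equiv v_{S_p}\).

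For the second congruence I would reuse the divisibility \((x-y)\mid(A(x)-A(y))\), valid for every \(A\in\mathfrak{O}[x]\), specialised to \(x=S_p\) and \(y=0\), so that \(S_p \mid (A_i(S_p)-A_i(0))\) in \(\mathfrak{O}\). The crucial point is that \(S_p\in p\mathfrak{O}\): since \(f(p)\ge 1\) and each \(k_j\ge 1\), every summand \(l_j p^{k_j f(p)}\) is divisible by \(p\), hence so is \(S_p\). Consequently \(A_i(S_p)\equiv A_i(0)\pmod{p\mathfrak{O}}\) for each \(i\), and multiplying by \(\lambda_i^{S_p}\) and summing gives \(v_{S_p} = \sum_{i=1}^m A_i(0)\lambda_i^{S_p} \equiv \sum_{i=1}^m A_i(S_p)\lambda_i^{S_p} = u_{S_p}\).

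The bulk of the argument is routine bookkeeping---tracking that the collapse of the sum-exponent is performed factor by factor and that congruences modulo the ideal \(p\mathfrak{O}\) may be added and multiplied---exactly as in Lemma~\ref{lem: modp3}. The one genuinely new ingredient, and the step I expect to require the most care, is the passage from the single term \(\ell p^{kf(p)}\) to the sum \(S_p\): one must check both that the exponent collapse \(\lambda_i^{S_p}\equiv\lambda_i^{S_1}\) survives the product over \(j\) and that the divisibility \(p\mid S_p\) still licenses the polynomial-argument congruence \(A_i(S_p)\equiv A_i(0)\), the latter holding precisely because each exponent \(k_j f(p)\) is positive.
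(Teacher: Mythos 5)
Your proposal is correct and follows essentially the same route as the paper's own proof: the paper likewise establishes the chain \(v_{S_1} \equiv v_{S_p} \equiv u_{S_p} \pmod{p\mathfrak{O}}\) via the two observations that \(S_p \in p\mathfrak{O}\) divides \(A(S_p)-A(0)\) and that repeated application of Lemma~\ref{lem: FLTgeneral} gives \(\lambda^{S_p}-\lambda^{S_1}\in p\mathfrak{O}\). You merely fill in details the paper leaves implicit---the induction extending Lemma~\ref{lem: FLTgeneral} to exponents \(p^{kf(p)}\), the factorisation \(\lambda_i^{S_p}=\prod_{j=1}^t \bigl(\lambda_i^{l_j}\bigr)^{p^{k_j f(p)}}\), and the observation that \(p\mid S_p\) requires each \(k_j f(p)\) to be positive---which is sound bookkeeping rather than a different argument.
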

\begin{proof}
We avoid repeating the proof of Lemma~\ref{lem: modp3} by limiting our presentation to the next two observations.  
First, for each polynomial \(A\in\mathfrak{O}[x]\) we have \(A(S_p) - A(0) \in p\mathfrak{O}\) since \(p\mathfrak{O} \ni S_p\) divides \(A(S_p) -A(0)\).  
Second, by repeated application of Lemma~\ref{lem: FLTgeneral}, we have \(\lambda^{S_p} - \lambda^{S_1} \in p\mathfrak{O}\) for \(\lambda \in \mathfrak{O}\). 
From these observations, one can obtain the congruences \(v_{S_1} \equiv v_{S_p} \equiv u_{S_p} \pmod{p\mathfrak{O}}\) and hence the desired result.
\end{proof}
%

We sketch the key steps in the proof of Theorem~\ref{thm: sumtheoremQ}.
\begin{proof}[Proof of Theorem~\ref{thm: sumtheoremQ}]
 Fix \(c\in\N\).
 Assume that \(u_{S_p}=0\) for some \(S_p(l_j;k_j)\in\mathcal{N}_c'(K)\) where \(p\Z\subset \mathfrak{O}\) is an unramified prime.
Note that \(v_{S_1} \neq 0\) since  \(S_p(l_j;k_j)\in\mathcal{N}_c'(K)\).
 Then, by Lemma~\ref{lem: sump}, \(v_{S_1} \in p\mathfrak{O}\) and so \(p\mathfrak{O} \vert v_{S_1}\mathfrak{O}\).  
 By Lemma~\ref{lem: pbound}, \(p\) necessarily divides \(N(v_{S_1}\mathfrak{O})\).  
 Since \(N(v_{S_1}\mathfrak{O})\) is computable, one can derive an effective bound on the rational primes \(p\) such that \(u_{S_p}=0\).
\end{proof}

\section{Hardness result}
In~\cite{blondel2002zero}, Blondel and Portier proved that the Skolem Problem is \NP-hard (see also~\cite{akshay2017complexity}).
In this section we show that the prime variant of the Skolem Problem is likewise \NP-hard.
Following~\cite{akshay2017complexity}, our proof is by reduction from the \textit{Subset Sum Problem}:
given a finite set of integer \(A=\{a_1,\ldots, a_m\}\) and \(b\in\Z\) a target, written in binary,
decide whether there is a subset \(S\subseteq \{1,\ldots, m\}\) such that \(\sum_{k\in S} a_k = b\).  

Let us state two well-known theorems in number theory in order to derive a simple corollary that is fundamental to our proof of~\autoref{thm:subsetprimered}.
\begin{theorem}[Chinese remainder theorem]
Let \(n_1,\ldots, n_m\) be positive integers that are pairwise co-prime. 
Then the system of \(m\) equations \(r \equiv a_k \pmod{n_k}\) with each \(a_k\in\Z\) has a unique solution modulo \(N\) where \(N= n_1n_2\cdots n_m\).
\end{theorem}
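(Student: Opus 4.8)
The plan is to establish the two halves of the statement separately: \emph{existence} of a solution by an explicit (and effective) construction, and \emph{uniqueness modulo $N$} by a short divisibility argument. Both halves hinge on the pairwise coprimality hypothesis, which I would invoke in two distinct ways described below.

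For existence, first I would set $N_k := N/n_k = \prod_{j\neq k} n_j$ for each $k\in\{1,\ldots,m\}$. Since the $n_j$ are pairwise coprime, no prime dividing $n_k$ divides any $n_j$ with $j\neq k$, so $\gcd(N_k, n_k)=1$. By B\'{e}zout's identity (computed, e.g., via the extended Euclidean algorithm, which also makes the construction effective) there is an integer $M_k$ with $N_k M_k \equiv 1 \pmod{n_k}$. I would then propose the candidate $r := \sum_{k=1}^m a_k N_k M_k$ and verify the congruences directly: fixing an index $i$, every summand with $k\neq i$ is divisible by $n_i$ (because $n_i \mid N_k$ whenever $k \neq i$), while the remaining summand satisfies $a_i N_i M_i \equiv a_i \pmod{n_i}$ by the defining property of $M_i$. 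Hence $r \equiv a_i \pmod{n_i}$ for every $i$, as required.

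For uniqueness, suppose $r$ and $r'$ both solve the system and set $d := r - r'$. Then $n_k \mid d$ for every $k$. The key step is to upgrade these individual divisibilities to $N = \prod_k n_k \mid d$: using pairwise coprimality I would argue by induction on $m$ that if coprime moduli $n_1,\ldots,n_m$ each divide $d$ then so does their product, the inductive step using that $\gcd(n_m, n_1\cdots n_{m-1})=1$, which itself follows from pairwise coprimality. Consequently $r \equiv r' \pmod{N}$, establishing uniqueness of the residue class.

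I do not anticipate a serious obstacle here, as the result is classical; the only two points genuinely requiring the hypothesis are the invertibility of $N_k$ modulo $n_k$ and the passage from simultaneous divisibility by each $n_k$ to divisibility by the product $N$. The mild subtlety worth flagging is that \emph{pairwise} coprimality (rather than mere coprimality of the whole tuple) is exactly what both of these steps need, and this is precisely the form in which the hypothesis is stated.
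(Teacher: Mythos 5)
Your proof is correct and complete: the paper states the Chinese remainder theorem as classical background (it is used only to derive Corollary~\ref{cor:dirichlet}) and gives no proof of its own, so there is nothing to diverge from. What you give is the standard constructive argument---existence via the Bézout inverses $M_k$ of $N_k = N/n_k$ modulo $n_k$ and the explicit solution $r=\sum_k a_k N_k M_k$, uniqueness by upgrading divisibility by each $n_k$ to divisibility by $N$ through pairwise coprimality---and both of the places where the hypothesis is genuinely needed are correctly identified and handled. The effectiveness remark (extended Euclidean algorithm) is a nice bonus given the paper's decidability context, though the paper does not require it.
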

Dirichlet proved the following theorem on primes in arithmetic progressions.  We use the notation \((m,n)\) to indicate the greatest common divisor of \(m,n\in\Z\).
\begin{theorem}
    Suppose that \(q\) and \(r\) are co-prime positive integers.  Then there are infinitely many primes of the form \(\ell q + r\) with \(\ell\in\N\).
\end{theorem}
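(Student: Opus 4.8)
The plan is to follow the classical analytic route via Dirichlet characters and $L$-functions, reducing the statement to a single non-vanishing result. Write $d=\varphi(q)$ and let $\chi$ range over the $d$ Dirichlet characters modulo $q$, namely the homomorphisms $(\Z/q\Z)^{\times}\to\C^{\times}$ extended to all of $\Z$ by setting $\chi(n)=0$ whenever $\gcd(n,q)>1$. These form a group under pointwise multiplication with identity the principal character $\chi_0$, and satisfy the orthogonality relation
\begin{equation*}
  \frac{1}{d}\sum_{\chi}\overline{\chi(r)}\,\chi(n)=\begin{cases}1 & \text{if } n\equiv r \pmod q,\\ 0 & \text{otherwise,}\end{cases}
\end{equation*}
valid whenever $\gcd(r,q)=1$. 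To each $\chi$ I would attach its $L$-function $L(s,\chi)=\sum_{n=1}^{\infty}\chi(n)n^{-s}$, which for $\Re(s)>1$ converges and admits the Euler product $\prod_p(1-\chi(p)p^{-s})^{-1}$.

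First I would take logarithms of the Euler products, obtaining $\log L(s,\chi)=\sum_p\chi(p)p^{-s}+g_\chi(s)$ for $\Re(s)>1$, where $g_\chi(s)$ absorbs the higher prime-power contributions and stays bounded as $s\to 1^{+}$. Multiplying by $\overline{\chi(r)}$, summing over $\chi$, and applying orthogonality isolates the progression:
\begin{equation*}
  \frac{1}{d}\sum_{\chi}\overline{\chi(r)}\log L(s,\chi)=\sum_{\substack{p\ \text{prime}\\ p\equiv r\,(\mathrm{mod}\ q)}}p^{-s}+\BigO{1}\qquad(s\to 1^{+}).
\end{equation*}
The theorem then follows if the left-hand side is unbounded as $s\to 1^{+}$, since a convergent tail $\sum p^{-s}$ over the progression would force only finitely many such primes.

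The divergence is governed by the behaviour of each $\log L(s,\chi)$ at $s=1$. For the principal character, $L(s,\chi_0)$ differs from the Riemann zeta function $\zeta(s)$ only in the finitely many Euler factors at primes dividing $q$; as $\zeta(s)$ has a simple pole at $s=1$, the term $\log L(s,\chi_0)\to+\infty$ supplies the needed growth. It then suffices to show that every non-principal $\chi$ contributes boundedly, i.e.\ that $\log L(s,\chi)$ remains bounded as $s\to 1^{+}$; this reduces to the assertion that $L(s,\chi)$ extends continuously to $s=1$ with $L(1,\chi)\neq 0$.

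The non-vanishing $L(1,\chi)\neq 0$ is where I expect the real obstacle to lie. Convergence of $L(s,\chi)$ on $\Re(s)>0$ for $\chi\neq\chi_0$ is routine via partial summation, since the partial sums of $\chi(n)$ are bounded. The non-vanishing, however, splits into cases. For a complex character ($\chi\neq\overline{\chi}$) I would use that $\prod_\chi L(s,\chi)$ has non-negative Dirichlet coefficients and inherits a pole at $s=1$ from $\chi_0$, so at most one factor may vanish there; pairing $\chi$ with $\overline{\chi}$ then excludes vanishing for complex $\chi$. The genuinely delicate case is that of a real non-principal (quadratic) character, which I would treat separately---for instance by showing that $\zeta(s)L(s,\chi)$ has non-negative coefficients yet would be forced to converge at $s=1$ if $L(1,\chi)=0$, a contradiction---this quadratic case being the classical heart of Dirichlet's theorem.
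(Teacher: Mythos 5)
This statement is not proved in the paper at all: it is quoted as Dirichlet's classical theorem on primes in arithmetic progressions, stated as background and used only to derive the corollary feeding the \NP-hardness reduction. So there is no in-paper argument to match; what you have written is the canonical analytic proof outline, and most of it is sound at the level of a sketch: the orthogonality relation isolating the progression, the pole of $L(s,\chi_0)$ at $s=1$ supplying the divergence, the reduction to $L(1,\chi)\neq 0$ for non-principal $\chi$, and the complex case via $\prod_\chi L(s,\chi)$, which is $\ge 1$ for real $s>1$ (its logarithm has non-negative coefficients), so its simple pole at $s=1$ can absorb at most one zero, while zeros of complex characters would come in the pair $\chi,\overline{\chi}$. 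It should be said that reproving Dirichlet's theorem is far beyond what the paper needs, but as a self-contained plan the structure is the right one.

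The genuine flaw is in your treatment of the real quadratic character, the case you yourself identify as the heart of the matter. As stated --- that $\zeta(s)L(s,\chi)$ ``would be forced to converge at $s=1$ if $L(1,\chi)=0$, a contradiction'' --- there is no contradiction: the coefficients $a_n=\sum_{d\mid n}\chi(d)$ satisfy $0\le a_n\le d(n)$, and nothing prevents $\sum_n a_n/n$ from converging; indeed, if $L(1,\chi)=0$ the series really does converge at $s=1$, harmlessly. The contradiction must be extracted at $s=\tfrac12$. If $L(1,\chi)=0$, the zero cancels the simple pole of $\zeta$, so $\zeta(s)L(s,\chi)$ is analytic on $\Re(s)>0$; by Landau's theorem on Dirichlet series with non-negative coefficients, the series $\sum_n a_n n^{-s}$ would then converge for all $\Re(s)>0$, in particular at $s=\tfrac12$. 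But for real $\chi$ one has $a_n=\prod_p\bigl(\sum_{j=0}^{v_p}\chi(p)^j\bigr)\ge 0$, where $n=\prod_p p^{v_p}$, and each factor is at least $1$ when $v_p$ is even, so $a_{m^2}\ge 1$ for every $m$ and $\sum_n a_n n^{-1/2}\ge \sum_m m^{-1}=\infty$ --- the required contradiction. (Alternatively, Mertens' elementary route: the hyperbola method gives $\sum_{n\le x} a_n n^{-1/2} = 2\sqrt{x}\,L(1,\chi)+\BigO{1}$, while the square terms force the left-hand side to be at least $\tfrac12\log x + \BigO{1}$, so $L(1,\chi)\neq 0$.) With that repair, and granting the routine deferred verifications (boundedness of $g_\chi$ near $s=1$, convergence of $L(s,\chi)$ on $\Re(s)>0$ by partial summation), your outline completes to a correct proof.
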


The next corollary is immediate.
\begin{cor} \label{cor:dirichlet}
 Let \(p_1, \ldots, p_m\) be a finite set of distinct primes.  Then the system of \(m\) equations \(r \equiv a_k \pmod{p_k}\) with each \(a_k\in\Z\) has a unique solution \(r \in \{0,1,\ldots, P-1\}\) where \(P = p_1 p_2\cdots p_m\).  Additionally, if \((r,P)=1\) then there are infinitely many \(\ell\in\N\) for which \(\ell P + r\) is prime.
\end{cor}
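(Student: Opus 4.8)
The plan is to prove Corollary~\ref{cor:dirichlet} by directly combining the two cited theorems, since the statement is essentially a packaging of the Chinese remainder theorem together with Dirichlet's theorem on primes in arithmetic progressions. First I would observe that since $p_1,\ldots,p_m$ are \emph{distinct} primes, they are pairwise coprime, so the hypotheses of the Chinese remainder theorem are met with $n_k = p_k$ and $N = P = p_1p_2\cdots p_m$. Applying that theorem directly yields a unique solution $r$ modulo $P$ to the system $r \equiv a_k \pmod{p_k}$; choosing the canonical representative in $\{0,1,\ldots,P-1\}$ gives the first assertion.

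For the second assertion, I would invoke the coprimality hypothesis $(r,P)=1$. The point is to set $q = P$ and apply Dirichlet's theorem in the form stated: since $q = P$ and $r$ are coprime positive integers, there are infinitely many primes of the form $\ell P + r$ with $\ell \in \N$. One small bookkeeping issue to address is that Dirichlet's theorem as stated requires $r$ to be a \emph{positive} integer, whereas the canonical representative could in principle be $r=0$; but the assumption $(r,P)=1$ rules out $r=0$ (as $\gcd(0,P)=P>1$), so $r\in\{1,\ldots,P-1\}$ and positivity is automatic.

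The proof is genuinely immediate, as the author already flags with the phrase ``The next corollary is immediate.'' There is no real obstacle here: the only subtlety worth a sentence is the alignment of the coprimality notation $(r,P)=1$ with the hypothesis ``$q$ and $r$ are co-prime'' in Dirichlet's theorem, and confirming that the representative class is nonzero and positive. I would therefore keep the write-up to two short sentences, along the following lines.

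\begin{proof}
Since the primes $p_1,\ldots,p_m$ are distinct, they are pairwise coprime, so the Chinese remainder theorem applies with $N=P=p_1p_2\cdots p_m$ and yields a unique solution $r$ modulo $P$ to the system $r\equiv a_k \pmod{p_k}$; we take $r\in\{0,1,\ldots,P-1\}$. If moreover $(r,P)=1$ then $r\ge 1$, and applying Dirichlet's theorem with $q=P$ shows that there are infinitely many $\ell\in\N$ for which $\ell P + r$ is prime.
\end{proof}
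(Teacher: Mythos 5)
Your proof is correct and takes the same route the paper intends: the paper offers no proof at all, simply declaring the corollary ``immediate'' from the two theorems stated just before it, which is precisely the CRT-plus-Dirichlet combination you spell out. Your one added observation---that $(r,P)=1$ rules out $r=0$, so the representative is positive as Dirichlet's theorem requires---is a sensible bit of bookkeeping the paper leaves implicit.
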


Recall that the \textit{\(n\)th cyclotomic polynomial} given by
    \begin{equation*}
       \Phi_n(x) = \prod_{\substack{k\in\{1,\ldots, n\} \\ (k,n)=1}} \left(x - \eu^{2\pi \iu k/ n}\right)
    \end{equation*}
is the minimal polynomial over $\mathbb{Q}$ of a primitive $n$th
root of unity.

We call an integer linear recurrence sequence \textit{cyclotomic} if its characteristic roots are all roots of unity.
The next theorem, concerning Skolem's Problem in the restricted setting of cyclotomic sequences, follows from work in~\cite{akshay2017complexity}. We reproduce the proof as a lead into our original work on the Skolem Problem restricted to prime numbers.

\begin{theorem} \label{thm:subsetred}
The cyclotomic Skolem Problem is \NP-hard.
\end{theorem}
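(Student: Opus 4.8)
The plan is to prove \NP-hardness by a polynomial-time many-one reduction from the Subset Sum Problem. Given an instance $(a_1,\ldots,a_m;b)$, I would first select $m$ distinct rational primes $p_1,\ldots,p_m$, say the first $m$ primes, each of magnitude $\BigO{m\log m}$ and hence computable in polynomial time. The central gadget is the fact that the $p$-th roots of unity detect divisibility: for a prime $p$ and the primitive root $\zeta_p=\eu^{2\pi\iu/p}$ one has $\sum_{j=0}^{p-1}\zeta_p^{jn}=p$ when $p\mid n$ and $0$ otherwise, since $\zeta_p^n$ is a nontrivial $p$-th root of unity whenever $p\nmid n$. Writing $\delta_k(n):=\frac{1}{p_k}\sum_{j=0}^{p_k-1}\zeta_{p_k}^{jn}$ for the resulting $\{0,1\}$-valued indicator of the event $p_k\mid n$, I would define the sequence
\begin{equation*}
  u_n:=\sum_{k=1}^m a_k\bigl(1-\delta_k(n)\bigr)-b,
\end{equation*}
so that $u_n=\sum_{k\,:\,p_k\nmid n}a_k-b$ for every $n$.

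Next I would verify that $(u_n)$ is a cyclotomic integer linear recurrence sequence constructible in polynomial time. Expanding the $\delta_k$ exhibits $u_n$ as a $\Q$-linear combination of the constant sequence and the sequences $n\mapsto\zeta_{p_k}^{jn}$ for $1\le j\le p_k-1$; all of these have characteristic roots that are roots of unity, so $(u_n)$ satisfies the linear recurrence with characteristic polynomial $\prod_{k=1}^m(x^{p_k}-1)$, each factor splitting into the cyclotomic factors $\Phi_1\Phi_{p_k}$. This recurrence has order at most $\sum_k p_k=\BigO{m^2\log m}$, its integer coefficients have polynomial bit-length, and the integer-valued initial terms are read off directly from the closed form. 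Since each $a_k$ and $b$ is given in binary, the whole instance has size polynomial in the input, so $(u_n)$ is a legitimate cyclotomic sequence produced in polynomial time.

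It then remains to argue correctness of the reduction. For any $n$ the subset $S(n):=\{k:p_k\nmid n\}$ determines $u_n=\sum_{k\in S(n)}a_k-b$, so $u_n=0$ exactly when $S(n)$ sums to $b$. Conversely, given any target subset $S\subseteq\{1,\ldots,m\}$, the Chinese Remainder Theorem supplies an $n\in\{0,\ldots,P-1\}$ with $P=p_1\cdots p_m$ satisfying $n\equiv 0\pmod{p_k}$ for $k\notin S$ and $n\equiv 1\pmod{p_k}$ for $k\in S$, whence $S(n)=S$. Therefore $(u_n)$ has a zero if and only if some subset of $\{a_1,\ldots,a_m\}$ sums to $b$, which establishes the reduction and hence the theorem.

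The encoding is conceptually clean, so the main obstacle is not correctness but the bookkeeping needed to certify that the reduction is genuinely polynomial. One must confirm that the chosen primes, the order of the recurrence, the coefficients of $\prod_k(x^{p_k}-1)$ (whose magnitudes are controlled because each monomial arises from at most $2^m$ subsets of $\{1,\ldots,m\}$ and so has bit-length $\BigO{m}$), and the initial segment of the sequence can all be generated within a polynomial size and time budget measured against the binary encoding of the Subset Sum instance.
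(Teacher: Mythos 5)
Your proposal is correct and is essentially the paper's own reduction: the paper likewise reduces from Subset Sum using $\{0,1\}$-valued selector sequences $s_k(n)$ of prime period $p_k$ indicating $p_k\mid n$, and your $\delta_k(n)$ is exactly that selector written explicitly as a root-of-unity filter, so the exponential-polynomial bookkeeping and the cyclotomic characteristic polynomial come out the same. The only (cosmetic) differences are that you test the complementary sum $\sum_{k\,:\,p_k\nmid n}a_k=b$ where the paper tests $\sum_{k\,:\,p_k\mid n}a_k=b$, and you produce the witness index by the Chinese Remainder Theorem where the paper simply takes $N=\prod_{k\in S}p_k$.
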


The proof of \autoref{thm:subsetred} is by reduction from the \text{Subset Sum} Problem and follows directly from the technical lemma, Lemma~\ref{lem:subsetred}, below.  
Before we present the proof, we introduce some notation.

Let \(\{p_1,\ldots, p_m\}\) be the set of the first \(m\) prime numbers.  
We define the linear recurrence sequence \((s_k(n))_{n=0}^\infty\) with \(k\in \{1,\ldots, m\}\) as follows.  
Let \(s_k(n) = s_k(n-p_k)\) for \(n\ge p_k\) with initial conditions \(s_k(0)=1\), \(s_k(1)=\cdots = s_k(p_k -1) = 0\).  
Then each sequence \((s_k(n))\) is periodic with period \(p_k\).  The characteristic polynomial associated to \((s_k(n))\) is given by
    \begin{equation*}
        x^{p_k} - 1 = \prod_{\ell=0}^{p_k-1} \mleft( x - \eu^{2\pi \iu \ell/p_k} \mright).
    \end{equation*}
Thus \((s_k(n))\) is a cyclotomic sequence.

In order to reduce the Subset Sum Problem to the cyclotomic Skolem Problem, we consider the inhomogeneous linear recurrence sequence \((t(n))_{n=0}^\infty\) with terms given by \(t(n) = b - \sum_{k=1}^m a_k s_k(n)\). 
The characteristic polynomial associated to \((t(n))\) is given by the least common multiple of 
    \begin{equation*}
        (x^{p_1}-1)(x-1), x^{p_2}-1, \ldots, x^{p_m}-1
    \end{equation*}
\text{(see~\cite{everest2003recurrence})}, from which it follows that each of the characteristic roots of \((t(n))\) are themselves roots of unity, i.e., $(t(n))$ is a cyclotomic sequence.

\begin{lem} \label{lem:subsetred}
For \((t(n))\) given as above, there exists \(N\in\N\) such that \(t(N) = 0\) if and only if the Subset Sum Problem with inputs \(\{a_1,\ldots, a_m; b\}\) has a solution.
\end{lem}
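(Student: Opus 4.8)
The plan is to recognise that each auxiliary sequence $s_k$ is nothing more than the indicator of divisibility by $p_k$, which collapses the vanishing condition $t(N)=0$ into a subset-sum equation, and then to move back and forth between subsets of $\{1,\ldots,m\}$ and divisibility patterns of $N$ using the Chinese Remainder Theorem. The only genuine content is this translation; the rest is bookkeeping.

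First I would record the key observation about the sequences $(s_k(n))$. By construction $s_k$ is $p_k$-periodic with $s_k(0)=1$ and $s_k(j)=0$ for $1\le j\le p_k-1$, so $s_k(n)=1$ exactly when $p_k\mid n$ and $s_k(n)=0$ otherwise. Hence for any index $N\in\N$, writing $S_N:=\{k\in\{1,\ldots,m\} : p_k\mid N\}$, we have $\sum_{k=1}^m a_k s_k(N)=\sum_{k\in S_N}a_k$, and therefore $t(N)=0$ if and only if $\sum_{k\in S_N}a_k=b$.

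This observation settles the forward implication at once: if $t(N)=0$ for some $N$, then $S_N\subseteq\{1,\ldots,m\}$ is a subset whose associated elements sum to $b$, so the Subset Sum instance $\{a_1,\ldots,a_m;b\}$ is solvable. For the converse I would start from a solution $S\subseteq\{1,\ldots,m\}$ with $\sum_{k\in S}a_k=b$ and construct an index $N$ whose divisibility pattern is exactly $S$. Since $p_1,\ldots,p_m$ are distinct primes, hence pairwise coprime, the Chinese Remainder Theorem furnishes an $N\in\N$ with $N\equiv 0\pmod{p_k}$ for $k\in S$ and $N\equiv 1\pmod{p_k}$ for $k\notin S$. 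Because every $p_k\ge 2$, the residue $1$ is nonzero modulo $p_k$, so $p_k\mid N$ holds precisely when $k\in S$; that is, $S_N=S$, and the observation above yields $t(N)=0$.

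The one point requiring care—and the main (minor) obstacle—is ensuring that $S_N$ equals $S$ exactly rather than merely containing it: one must forbid the "unwanted" primes from dividing $N$, which is why the construction prescribes a nonzero residue for each $k\notin S$ rather than leaving those congruences free. I would also remark that this lemma needs only the Chinese Remainder Theorem; Dirichlet's theorem (via \autoref{cor:dirichlet}) is not invoked here and enters only in the prime-index variant, where $N$ is additionally required to be prime and one must select the free residue class so that $(N,\,p_1\cdots p_m)=1$ to guarantee infinitely many prime choices.
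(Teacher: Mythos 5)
Your proof is correct and follows the same overall strategy as the paper's: the selector identity \(s_k(n)=1\) if and only if \(p_k\mid n\) reduces \(t(N)=0\) to a subset-sum equation over the divisibility pattern of \(N\), the forward direction is immediate from the selectors being \(\{0,1\}\)-valued, and the converse is settled by exhibiting an index whose divisibility pattern is exactly the solution set \(S\). The one place you diverge is the construction of that index: the paper simply takes \(N=\prod_{k\in S}p_k\), for which \(p_k\mid N\) holds precisely when \(k\in S\) by uniqueness of prime factorisation, so the Chinese Remainder Theorem is not needed at all; you instead invoke CRT to solve \(N\equiv 0\pmod{p_k}\) for \(k\in S\) and \(N\equiv 1\pmod{p_k}\) for \(k\notin S\). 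Your worry about forcing \(S_N=S\) rather than merely \(S_N\supseteq S\) is a genuine issue in your setup and is correctly handled by the nonzero residues, whereas in the paper's construction it is automatic. Your heavier construction is not wasted, though: it is essentially the template the paper deploys for the prime-index variant (Lemma~\ref{lem:subsetprimered}), where the explicit product is unavailable because the witness must be prime, and one prescribes residues \(1\) and \(2\) and appeals to Dirichlet's theorem via Corollary~\ref{cor:dirichlet} --- exactly the division of labour your closing remark identifies.
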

\begin{proof}
Suppose that there exists an \(N\in\N\) such that \(t(N)=0\), then the Subset Sum Problem has a solution because the selectors \(s_k(n)\) are \(\{0,1\}\)-valued.  
Conversely, suppose that there is a subset \(S\subseteq \{1,\ldots, m\}\) such that  \(\sum_{k\in S} a_k =b\) and define \(N = \prod_{k\in S} p_k\).  
We have  \(s_k(N) = 1\) for each \(k\in S\) since \(p_k \mid N\), and \(s_k(N)=0\) otherwise.  
Thus
    \begin{equation*}
        t(N) = b - \sum_{k=1}^m a_k s_k(N) = b - \sum_{k \in S} a_k = 0,
    \end{equation*}
as required.
\end{proof}

We prove the following complexity result for the Skolem Problem for primes.  

\begin{theorem} \label{thm:subsetprimered}  Suppose that \((u_n)\) is a cyclotomic integer linear recurrence sequence.  The problem of deciding whether there is a prime \(p\in\N\) such that \(u_p = 0\) 
is \NP-hard.
\end{theorem}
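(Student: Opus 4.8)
The plan is to give a polynomial-time reduction from the Subset Sum Problem, reusing the inhomogeneous cyclotomic recurrence $t(n) = b - \sum_{k=1}^m a_k s_k(n)$ from the proof of \autoref{thm:subsetred}, but redesigning the selectors $s_k$ so that a \emph{prime} index can witness any chosen subset. The obstacle with the earlier construction is that its zero occurs at $N = \prod_{k\in S} p_k$, a product of several primes and hence highly composite; moreover its selectors detect $n\equiv 0\pmod{p_k}$, and a prime $p$ can satisfy $p\equiv 0\pmod{p_k}$ for at most one modulus, so divisibility selectors can encode only singleton subsets.

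First I would replace divisibility by residue detection. I would choose distinct odd primes $q_1,\dots,q_m$ (say the first $m$ primes exceeding $2$), and for each $k$ define a periodic selector $s_k$ of period $q_k$ by $s_k(n)=1$ when $n\equiv 1\pmod{q_k}$ and $s_k(n)=0$ otherwise; concretely $s_k(n)=s_k(n-q_k)$ with $s_k(1)=1$ and $s_k(j)=0$ for the remaining residues $j$. Each such sequence is cyclotomic, its characteristic polynomial dividing $x^{q_k}-1$, so the associated $t(n)=b-\sum_k a_k s_k(n)$ is again a cyclotomic integer recurrence whose characteristic polynomial is a least common multiple of $(x^{q_1}-1)(x-1),\, x^{q_2}-1,\dots,\, x^{q_m}-1$. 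By the prime number theorem $q_m = \BigO{m\log m}$, so the whole instance is constructible in time polynomial in the size of the Subset Sum input, and I would set $u_n := t(n)$.

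The crux is the equivalence between prime zeros of $t$ and subset solutions, and this is exactly where \autoref{cor:dirichlet} does the work. For the forward direction, if $t(p)=0$ for a prime $p$, I would read off the subset $S=\{k : p\equiv 1\pmod{q_k}\}$; since each $s_k(p)\in\{0,1\}$ the identity $0=t(p)=b-\sum_{k\in S}a_k$ shows $\sum_{k\in S}a_k=b$. For the converse, given a solution $S$, \autoref{cor:dirichlet} lets me take the unique $r\in\{0,\dots,P-1\}$, where $P=q_1\cdots q_m$, with $r\equiv 1\pmod{q_k}$ for $k\in S$ and $r\equiv 2\pmod{q_k}$ for $k\notin S$; since every $q_k$ is odd we have $(r,P)=1$, so the same corollary furnishes a prime $p\equiv r\pmod P$. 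For this $p$ one has $s_k(p)=1$ exactly for $k\in S$, whence $t(p)=b-\sum_{k\in S}a_k=0$.

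The hard part will be the converse direction, and the right way to surmount it is the residue-selector design together with Dirichlet's theorem: residue-$1$ detection lets a single prime $p$ realise an \emph{arbitrary} subset pattern through its residues modulo the $q_k$, while the coprimality $(r,P)=1$---secured by working with odd moduli, so that $r$ is congruent to $1$ or $2$ modulo each $q_k$ and never to $0$---is precisely the hypothesis needed to invoke \autoref{cor:dirichlet} and guarantee that such a prime actually exists. With both directions in place the reduction is complete, establishing that the prime variant of the cyclotomic Skolem Problem is \NP-hard.
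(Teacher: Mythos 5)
Your proposal is correct and matches the paper's own proof essentially step for step: the paper likewise replaces the divisibility selectors of \autoref{thm:subsetred} by residue-$1$ selectors $\sigma_k$ modulo the first $m$ odd primes, forms $\tau(n)=b-\sum_k a_k\sigma_k(n)$, and proves the same equivalence (Lemma~\ref{lem:subsetprimered}) using exactly your residue pattern ($r\equiv 1$ on $S$, $r\equiv 2$ off $S$) together with the Chinese Remainder Theorem and Dirichlet via Corollary~\ref{cor:dirichlet}. Nothing of substance differs, so no further comparison is needed.
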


The proof of \autoref{thm:subsetprimered} involves an analysis of the 
\NP-hardness proof for Skolem's Problem.  Technically we will derive the result from Lemma~\ref{lem:subsetprimered}, below.

Let \(p_1,\ldots, p_m\) be the first \(m\) odd primes.  We define selector sequences \((\sigma_k(n))\) with \(k\in\{1,\ldots, m\}\) as follows.  Let \(\sigma_k(n) = \sigma_k(n-p_k)\) for \(n\ge p_k\) with initial conditions \(\sigma_k(1)=1\), \(\sigma_k(0)= \sigma_k(2) = \cdots = \sigma_k(p_k -1) = 0\).  
Then each sequence \((\sigma_k(n))\) is periodic with period \(p_k\).  Let \(\tau(n) = b - \sum_{k=1}^m a_k \sigma_k(n) \). 
It is easily shown that \((\sigma_k(n))\) and \((\tau(n))\) are cyclotomic recurrence sequences.

\begin{lem}
 \label{lem:subsetprimered}
There exists an odd prime \(p\in\N\) such that \(\tau(p)=0\) if and only if there exists a subset \(S\subseteq \{1,\ldots, m\}\) that is a solution to the Subset Sum Problem with inputs \(\{a_1,\ldots, a_m; b\}\). 
\end{lem}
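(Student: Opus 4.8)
The plan is to prove both implications by translating the condition $\tau(p)=0$ into a statement about the residues of $p$ modulo the primes $p_1,\ldots,p_m$. By construction the selector $(\sigma_k(n))$ satisfies $\sigma_k(n)=1$ when $n\equiv 1\pmod{p_k}$ and $\sigma_k(n)=0$ otherwise; hence $\tau(p)=b-\sum_{k=1}^m a_k\sigma_k(p)$ vanishes exactly when the sum of those $a_k$ with $p\equiv 1\pmod{p_k}$ equals $b$.

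For the forward implication, I would suppose $\tau(p)=0$ for some odd prime $p$ and set $S=\{k : p\equiv 1\pmod{p_k}\}$. Since each $\sigma_k(p)\in\{0,1\}$, the identity $\tau(p)=0$ rearranges to $\sum_{k\in S}a_k=b$, exhibiting $S$ as a solution to the Subset Sum instance. This direction is immediate.

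The substance of the lemma is the converse, and this is where I expect the main obstacle to lie: given a solution $S$ we must produce an \emph{actual prime} $p$ whose residues select exactly the indices of $S$. I would prescribe residues $r\equiv 1\pmod{p_k}$ for $k\in S$ and $r\equiv 2\pmod{p_k}$ for $k\notin S$. The point of working with odd primes is that $p_k\ge 3$, so each prescribed residue is neither $0$ nor $1$ modulo the corresponding $p_k$ (note $2\not\equiv 1\pmod{p_k}$ since $p_k\nmid 1$). By the Chinese Remainder Theorem these congruences admit a unique solution $r$ modulo $P=p_1\cdots p_m$, and because $r\not\equiv 0\pmod{p_k}$ for every $k$ we have $(r,P)=1$.

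With coprimality established, Corollary~\ref{cor:dirichlet} supplies infinitely many $\ell\in\N$ for which $\ell P+r$ is prime; I would select one such prime $p>2$, which is consequently odd. By construction $p\equiv 1\pmod{p_k}$ precisely for $k\in S$, so $\sigma_k(p)=1$ exactly on $S$, and therefore $\tau(p)=b-\sum_{k\in S}a_k=0$. The only delicate point is the coprimality requirement $(r,P)=1$ needed to invoke Dirichlet's theorem, and the restriction to odd primes is exactly what secures it.
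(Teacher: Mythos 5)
Your proof is correct and follows essentially the same route as the paper's: the same residue prescription ($r\equiv 1\pmod{p_k}$ for $k\in S$, $r\equiv 2\pmod{p_k}$ otherwise), the Chinese Remainder Theorem, and then Corollary~\ref{cor:dirichlet} to extract a prime from the resulting progression. If anything, you are slightly more explicit than the paper in verifying the coprimality $(r,P)=1$ (using $p_k\ge 3$) and in noting that the prime can be chosen odd, both of which the paper leaves as easy remarks.
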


\begin{proof}
Suppose that there is an odd prime \(p\in\N\) such that \(\tau(p)=0\).  Then there is a solution to the Subset Sum Problem as \(\sigma_k(p)\in \{0,1\}\) for each \(k\).

Conversely, suppose that there a subset \(S\subseteq \{1,\ldots, m\}\) such that \(\sum_{k\in S} a_k = b\). Consider the set \(Q(S)\subseteq \Z\) of integer solutions to the set of \(m\) equations
    \begin{equation*}
    \begin{cases}
    r \equiv 1 \pmod{p_k} & \text{if\ } k\in S, \quad \text{and} \\
    r \equiv 2 \pmod{p_k} & \text{if\ } k\in \{1,\ldots, m\}\setminus S.
    \end{cases}
    \end{equation*}
The choice of residue ensures that \(r\) is not divisible by any of the primes \(p_1,p_2,\ldots, p_m\).  By the Chinese Remainder Theorem, \(Q(S)\) is an infinite arithmetic progression.  Suppose that \(q\in Q(S)\). Then, by definition of the selector sequences, \(\sigma_k(q)=1\) if and only if \(q \equiv 1 \pmod{p_k}\) if and only if \(k\in S\).  Then
    \begin{equation*}
        \tau(q) = b - \sum_{k=1}^m a_k \sigma_k(q) = b - \sum_{k\in S} a_k = 0.
    \end{equation*}
    
It remains to show that there is a prime number in \(Q(S)\).  This result follows easily from Corollary~\ref{cor:dirichlet}, which completes the proof.
\end{proof}

\section{Summary}

In this paper we have given decision procedures for finding zeroes of certain prescribed  linear recurrence sequences.
Our main result shows how to decide the existence of a prime \(p\) such that \(u_p=0\) for a simple linear recurrence sequence \((u_n)\). 
We have noted that this decision problem is \NP-hard and, implicitly, that the magnitude of the smallest prime \(p\) such that \(u_p=0\) is at least exponential in the size of the problem instance.
On the other hand, our decision procedure yields a double exponential bound on the magnitude of the prime \(p\).  
Closing this exponential gap would be an interesting direction for further work.
Another direction for research would be to locate zeroes \(u_n=0\) where the index \(n\in\N\) has two prime factors. 

%

\bibliographystyle{plainurl}
\bibliography{Skolembib}

\appendix

\end{document}